\newtheorem{theorem}{Theorem}     
\newtheorem{lemma}[theorem]{Lemma}
\newtheorem{wpgc}[theorem]{Wegner's Planar Graph Conjecture}
\newtheorem{claim}{Claim}
\newcommand{\vp}{\varphi}
\title{Relaxation of Wegner's Planar Graph Conjecture for maximum degree 4}
\author{Eun-Kyung Cho\thanks{
Department of Mathematics, Hankuk University of Foreign Studies, Yongin-si, Gyeonggi-do, Republic of Korea.
 \texttt{ekcho2020@gmail.com}
}
\and Ilkyoo Choi\thanks{
Department of Mathematics, Hankuk University of Foreign Studies, Yongin-si, Gyeonggi-do, Republic of Korea.
\texttt{ilkyoo@hufs.ac.kr}
}
\and Bernard Lidick\'y\thanks{
Department of Mathematics, Iowa State University, Ames, IA, USA.
\texttt{lidicky@iastate.edu}
}}
\date\today
\begin{document}

\maketitle

\begin{abstract}
The famous Wegner's Planar Graph Conjecture asserts tight upper bounds on the chromatic number of the square $G^2$ of a planar graph $G$, depending on the maximum degree $\Delta(G)$ of $G$. 
The only case that the conjecture is resolved is when $\Delta(G)=3$, which was proven to be true by Thomassen, and independently by Hartke, Jahanbekam, and Thomas.
For $\Delta(G)=4$, Wegner's Planar Graph Conjecture states that the chromatic number of $G^2$ is at most 9; even this case is still widely open, and very recently Bousquet, de Meyer, Deschamps, and Pierron claimed an upper bound of 12.

We take a completely different approach, and show that a relaxation of properly coloring the square of a planar graph $G$ with $\Delta(G)=4$ can be achieved with 9 colors. 
Instead of requiring every color in the neighborhood of a vertex to be unique, which is equivalent to a proper coloring of $G^2$, we seek a proper coloring of $G$
such that at most one color is allowed to be repeated in the neighborhood of a vertex of degree 4, but nowhere else.  
\end{abstract}

\section{Introduction}

Given a graph $G$, let $V(G)$ and $E(G)$ denote the set of vertices and the set of edges, respectively, of $G$.
For each $v \in V(G)$, the {\it neighborhood} of $v$, denoted $N_G(v)$, is the set of  vertices adjacent to $v$, and the {\it degree} of $v$, denoted $d_G(v)$, is the number of neighbors of $v$.
A {\it proper coloring} $\vp$ of a graph $G$ assigns colors to vertices of $G$ so that $\vp(x)\neq\vp(y)$ for every edge $xy$ of $G$. 

Given a graph $G$, the {\it square} of $G$, denoted $G^2$, is the graph obtained from $G$ by adding edges between every pair of vertices at distance $2$. 
The famous and very popular Wegner's Planar Graph Conjecture~\cite{Wegner}, first raised in 1977, asserts tight upper bounds on the chromatic number of the square $G^2$ of a planar graph $G$, depending on the maximum degree $\Delta(G)$ of $G$. 
We state the conjecture below, and refer the readers to~\cite{2017CrWe} for illustrations of the tightness examples. 

\begin{wpgc}[\cite{Wegner}]
If $G$ is a planar graph, then 
\[\chi(G^2) \le
\begin{cases}
7 & \textrm{if } \Delta(G) = 3,\\
\Delta(G)+5 & \textrm{if } \Delta(G)\in\{4,5,6,7\}, \\
 \frac{3}{2}\Delta(G)  +1 & \textrm{if } \Delta(G) \ge 8.
\end{cases}\]
\end{wpgc}

For sufficiently large maximum degree, Havet, van den Heuvel, McDiarmid, and Reed~\cite{2007HaHeMcRe} proved that the above conjecture is true asymptotically. 
For exact results, Molloy and Salavatipour~\cite{2005MoSa} proved the current best bound. 

\begin{theorem}[\cite{2005MoSa}]
If $G$ is a planar graph, then $\chi(G^2)\leq \left\lceil\frac{5}{3}\Delta(G)\right\rceil+78$. 
\end{theorem}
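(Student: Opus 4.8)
The plan is to split on the size of $\Delta=\Delta(G)$. When $\Delta$ is at most an absolute constant, the bound is immediate from cruder estimates: $\chi(G^2)\le\Delta^2+1$ holds for every graph by greedily colouring $G^2$ in an arbitrary order, and known linear bounds such as $\chi(G^2)\le 2\Delta+25$ for planar $G$ (van den Heuvel--McGuinness) comfortably beat $\lceil\tfrac53\Delta\rceil+78$ for all moderate $\Delta$. So the additive constant $78$ exists precisely to absorb the small cases, and the genuine content is an asymptotically sharp statement $\chi(G^2)\le\lceil\tfrac53\Delta\rceil+c$ valid for every $\Delta$ above some threshold $\Delta_0$. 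The remainder of the proposal addresses that regime.

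For $\Delta\ge\Delta_0$ I would argue by a minimal counterexample: take a planar $G$ with $\Delta(G)=\Delta$ minimizing $|V(G)|+|E(G)|$ subject to $\chi(G^2)>k$, where $k=\lceil\tfrac53\Delta\rceil+c$. First one collects the easy facts forced by minimality — $G$ has no \emph{directly reducible} vertex (deletable, coloured in the smaller graph by minimality, then greedily re-coloured), no small reducible cut structures, and so on. The heart of the structural phase is a discharging argument on a fixed plane embedding: give each vertex $v$ the charge $d(v)-6$ and each face $f$ the charge $2\,\mathrm{len}(f)-6$, so that Euler's formula makes the total charge $-12$; then prescribe local rules that move charge from high-degree vertices and long faces toward low-degree vertices and triangles. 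The target conclusion is that every plane graph with $\Delta\ge\Delta_0$ and none of the easy reducible features must contain one of a finite list of \emph{configurations}, each a bounded-radius patch around a high-degree vertex together with degree conditions on the vertices nearby.

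It then remains to prove that every configuration on the list is \emph{reducible}. Given a configuration on a vertex set $S$, delete $S$, colour $(G-S)^2$ with $k$ colours by minimality, and extend the colouring to $S$: each $v\in S$ receives the list $L(v)$ of colours not yet used on $N_{G^2}(v)\setminus S$, and one must properly colour $G^2[S]$ from these lists. Two features of planarity make this possible. Many of the conflicts felt by a patch vertex are with other patch vertices, so they are handled inside the list-colouring of the sparse graph $G^2[S]$ rather than eating into $L(v)$; and the cyclic arrangement of the neighbours of a high-degree vertex, forced by planarity, makes many pairs of them share a common neighbour, so the distance-$2$ set is smaller than a naive sum of degrees would give and $|L(v)|$ is correspondingly larger. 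Combining these with a degeneracy or Hall-type list-colouring argument on $G^2[S]$ completes the extension, and the constant $\tfrac53$ is exactly the smallest one for which the worst surviving configuration — roughly, a vertex of degree near $\Delta$ with several neighbours again of degree near $\Delta$ — can be recoloured.

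The step I expect to be the main obstacle is precisely this reducibility analysis, and within it the configurations in which a high-degree vertex has several \emph{medium}-degree neighbours (degree a fixed fraction of $\Delta$): these inflate their neighbours' distance-$2$ sets, the naive list sizes fall short, and one must account for shared neighbours very carefully and invoke Hall's theorem rather than a plain greedy ordering to finish the colouring. At the same time, the discharging rules must be tuned so that the configurations that actually appear are exactly those for which the extension succeeds, which couples the two phases and produces a long, delicate case analysis.
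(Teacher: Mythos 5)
This statement is not proved in the paper at all: it is quoted as a known theorem of Molloy and Salavatipour~\cite{2005MoSa}, so the only thing to measure your proposal against is that (long) cited proof, and your text is a plan rather than a proof. The small-$\Delta$ reduction is fine if one accepts the cited bound $\chi(G^2)\le 2\Delta+25$ (it beats $\lceil\frac{5}{3}\Delta\rceil+78$ exactly for $\Delta\le 159$), but for large $\Delta$ everything that actually carries the content of the theorem is missing: you give no finite list of configurations, no concrete discharging rules, no verification that the charges $d(v)-6$ and $2\,\mathrm{len}(f)-6$ force one of the listed configurations, and no reducibility proofs, hence no derivation of the constant $\frac{5}{3}$ or of any admissible additive constant. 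You explicitly defer this (``a long, delicate case analysis'') and even name it as the expected obstacle; as written, the same outline could be offered verbatim as a plan for proving $\frac{3}{2}\Delta+C$, which nobody knows how to do, so the proposal cannot be accepted as a proof.

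There is also a concrete step in your reducibility template that fails as stated. If you delete $S$ and colour $(G-S)^2$ by minimality, you have only a proper colouring of $(G-S)^2$, which is in general a proper subgraph of $G^2-S$: two vertices outside $S$ whose only common neighbour lies in $S$ are adjacent in $G^2$ but not in $(G-S)^2$, so they may receive equal colours, and your extension (lists only for the vertices of $S$) never repairs this conflict. This is precisely why the present paper works with the reduced graph $G*S$, in which edges between such pairs are added before minimality is invoked; Molloy and Salavatipour likewise do not argue by naive deletion plus patch-by-patch list extension, but through a structural decomposition of planar graphs and a carefully ordered colouring procedure. So beyond the absent case analysis, the reduction scheme itself would have to be modified before any configuration could be shown reducible.
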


The only case that the conjecture is resolved is when $\Delta(G)= 3$, which was proven to be true by Thomassen~\cite{2018Thomassen}, and independently by Hartke, Jahanbekam, Thomas~\cite{WegnerHartke}; 
the former proof uses a meticulous induction argument, and the latter uses a simple discharging argument with a computer assisted proof of its reducible configurations. 

For planar graphs with maximum degree at least $4$, Wegner's Planar Graph Conjecture is still wide open, and we refer the readers to the following references for various partial results, oftentimes with lower bound constraints on the maximum degree~\cite{2003AgHa,unfound_BoBrGlHe,arXiv_BoDeMePi_improved,thesis_Jonas,arXiv_KrRzTu,2002MaMa,2003HeMc,unfound_Wong,2018ZhBu}. 
In particular, when the maximum degree is exactly $4$, after a series of improvements in~\cite{2014CrErSk,2016CrRa,2022ZhBuZh} by various authors, Bousquet, de Meyer, Deschamps, and  Pierron~\cite{arxiv_BoMeDePi_square} very recently claimed to have established an upper bound of 12. 
Note that the conjectured upper bound is 9. 


\medskip

In this paper, we take a completely different approach, and show that a relaxation of coloring the square of a planar graph with maximum degree 4 can be achieved with 9 colors. 
Instead of requiring every color in the neighborhood of a vertex to be unique, which is equivalent to a proper coloring of $G^2$, we seek a proper coloring of $G$ such that at most one color is allowed to be repeated in the neighborhood of a vertex of degree 4, but nowhere else. 
In other words, every vertex $v$ has at least $\min\{2, d(v)\}$ colors appearing exactly once in its neighborhood.
Note that requiring $\min\{3, d(v)\}$ unique colors in the neighborhood of every vertex $v$ is equivalent to a proper coloring of the square of the graph when it has maximum degree 4. 
We now state our main result: 

\begin{theorem}\label{thm:main}
Every planar graph has a proper 9-coloring such that each neighborhood of a vertex $v$ has at least $\min\{2, d(v)\}$ unique colors.
In other words, every planar graph has a proper 9-coloring such that at most one color is allowed to be repeated in the neighborhood of a vertex of degree $4$, but nowhere else.  
\end{theorem}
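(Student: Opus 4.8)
The plan is to prove Theorem~\ref{thm:main} by the discharging method, working with a hypothetical minimum counterexample $G$ (minimizing $|V(G)|+|E(G)|$, say). Since the property we want is hereditary in a suitable sense, $G$ must be connected and we may assume $\Delta(G) \le 4$; indeed, for any vertex of degree at least $5$ the square-type constraints force at least $6$ distinct colors among its closed neighborhood, and since such a vertex does not interact with the relaxation, a standard argument shows a vertex of large degree is not the obstruction (more precisely, one first reduces to $\Delta(G)=4$, since when $\Delta(G)\le 3$ the statement follows from a proper coloring of $G^2$ with $7$ colors by Thomassen / Hartke--Jahanbekam--Thomas, which is well within $9$ colors). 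So the heart of the matter is $\Delta(G)=4$.

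The first block of work is to establish \emph{reducible configurations}: small subgraphs $H$ that cannot appear in $G$. For each candidate $H$, one deletes a carefully chosen vertex or edge, invokes minimality to $9$-color the smaller graph, and then argues that the coloring extends to $G$. The extension step is where the relaxation helps: when trying to color a vertex $v$ of degree $4$, instead of needing its color to differ from the $\le 4$ neighbors plus be ``seen at most once'' among up to $12$ second-neighbors, we only need that at most one repeat occurs in $N(v)$, which is a much weaker local condition and gives more room. Typical reducible configurations will be: vertices of degree $\le 2$; adjacent vertices of small degree; a degree-$3$ vertex with two degree-$3$ neighbors; certain configurations of $3$-faces and $4$-faces incident to low-degree vertices; and separating structures of small order. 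For the trickier ones (e.g.\ a $3$-vertex all of whose neighbors have degree $4$, or short chains of $3$-vertices) I expect to need a small amount of case analysis on the colors already used, possibly recoloring one neighbor, and in the spirit of Hartke--Jahanbekam--Thomas this may be where a computer-assisted check of finitely many configurations is the cleanest route. I would organize these as a sequence of \texttt{Claim}s, each asserting ``$G$ does not contain $H_i$.''

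The second block is the discharging argument itself. Assign to each vertex $v$ the charge $d(v)-4$ and to each face $f$ the charge $\ell(f)-4$, where $\ell(f)$ is the length of $f$; by Euler's formula the total charge is $-8$. Then design discharging rules that move charge from faces of length $\ge 5$ and from $4$-vertices toward $3$-vertices and short faces, so that after discharging every vertex and face has nonnegative charge --- contradicting the negative total. The rules will be tuned so that the only way a vertex or face ends up with negative charge is if $G$ contains one of the reducible configurations already excluded; verifying this ``final charge is nonnegative'' step, configuration by configuration, is the bulk of the calculation. Because $\Delta(G)=4$, $4$-vertices have charge $0$ and cannot be donors unless we are careful, so the engine must be the large faces and the structure around $3$-vertices, meaning the reducible-configuration list must be rich enough to forbid a $3$-vertex from being ``surrounded'' by short faces and other $3$-vertices with no large face nearby to pay it.

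I expect the main obstacle to be the reducibility of the configurations involving $3$-vertices adjacent only to $4$-vertices, together with short faces: the relaxation is generous, but $9$ colors with $\Delta=4$ is still tight enough (each $4$-vertex ``sees'' up to $1+4+12=17$ vertices in $G^2$, trimmed to needing only $\ge 2$ unique colors in $N(v)$) that the color-counting in the extension step needs the right configuration excluded to create slack. Getting the discharging rules and the reducible list to mesh --- so that no configuration survives both --- will require iterating between the two blocks, and I anticipate the cleanest write-up packages the finite case analysis for the hardest configurations into a computer verification, as in~\cite{WegnerHartke}.
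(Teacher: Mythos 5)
Your outline does follow the same general strategy as the paper (minimal counterexample, reducible configurations, discharging with charges $d(\cdot)-4$), but as written it is a plan rather than a proof: every step that carries actual mathematical weight is deferred with ``I expect,'' ``typical reducible configurations will be,'' or an appeal to a possible computer check. The central missing idea is the mechanism that makes the extension step work at all. If you merely delete a set $S$ of low-degree vertices and color $G-S$ by minimality, the neighbors of $S$ may lose degree, so the inductive coloring only guarantees them $\min\{2,d_{G-S}(u)\}$ unique colors, and worse, their remaining neighbors may share colors, so there may be no way to color $S$ that restores two unique colors to them. The paper resolves this with the $S$-reduced graph $G*S$ (adding edges between outside neighbors of $S$), together with the bookkeeping sets $B_S(u)$ and $C_{G*S}(v)$ satisfying $|B_S(u)|\le 3$, which is exactly what makes the forbidden-color counts small enough to extend with $9$ colors. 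Without this (or an equivalent device) your reducibility arguments for the configurations you name -- e.g.\ a $3$-vertex with two $3$-neighbors, which in fact is \emph{not} on the paper's reducible list; the paper only excludes paths of four $3$-vertices (Claim~\ref{clm:3333}) and paths of three $3$-vertices whose middle vertex has a $4$-neighbor (Claim~\ref{clm:333-4}) -- cannot be carried out, and some of them are likely false as stated.

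The second gap is that the reducible list and the discharging rules are never pinned down, and they must mesh exactly. The paper's final charge analysis needs precisely: no $2^-$-vertices and no triangles (Claims~\ref{clm:3vx} and~\ref{clm:triangle}), no $3$-vertex on two $4$-faces (Claim~\ref{clm:4faces}), no four consecutive $3$-vertices on a face (Claim~\ref{clm:3333}), and the ``good/bad $3$-vertex'' dichotomy on $5$-faces (Claims~\ref{clm:33344}--\ref{clm:bad3vx}); the rules \textbf{[R1]}--\textbf{[R3]} send charge only from $5^+$-faces to $3$-vertices, so your concern about $4$-vertices needing to be donors is a symptom of not having identified the right rules. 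Also, your opening reduction is shaky: you cannot simply ``assume $\Delta(G)=4$,'' since a minimal counterexample could contain vertices of degree $5$ or more alongside degree-$4$ vertices, and deleting or ignoring them is not justified by the hereditary property alone; this needs an argument (or the theorem should be treated in the bounded-degree setting from the start, as the paper implicitly does). Finally, no computer assistance is needed: all reducibility proofs in the paper are short explicit color-counting arguments once the $S$-reduced graph is available, so packaging the hard cases into a computer search is not a substitute for identifying the right configurations.
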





An {\it $h$-PCF $k$-coloring} $\vp$ of a graph $G$ is a proper $k$-coloring of $G$ such that each neighborhood of every vertex $v$ has at least $\min\{h, d(v)\}$ unique colors. 
This concept is a generalization of proper conflict-free coloring, defined recently by Fabrici, Lu\v zar, Rindo\v sov\'a, and Sot\'ak~\cite{fabrici2022proper}, see also~\cite{arXiv_CaPeSk, arXiv_Hickingbotham, arXiv_Liu, conflictfree}.

A {\it $k$-vertex}, {\it $k^-$-vertex}, {\it $k^+$-vertex} is a vertex of degree $k$, at least $k$, at most $k$, respectively. 

Given a vertex $v$ of a graph $G$ with a $2$-PCF coloring $\vp$, the {\it unique colors of $v$} are the unique colors appearing in the neighborhood of $v$; in particular, let $\vp_1(v)$ and $\vp_2(v)$ denote two unique colors of $v$, if they exist. For $X \subseteq V(G)$, we abuse the notation and define $\vp(X) = \{\vp(v):v\in X\}$.


For $S \subseteq V(G)$ where each vertex in $S$ has at most two neighbors not in $S$, define $G*S$ to be the graph obtained from $G$ by removing $S$ and adding an edge $uv$ for $u, v\in V(G)\setminus S$ if $u$ and $v$ have a common neighbor in $S$ and $uv$ is not an edge already;
$G*S$ is called the {\it $S$-reduced graph}.
Note that $G*S$ is planar whenever $G$ is planar, and the maximum degree of $G*S$ is at most the maximum degree of $G$.

For a $2$-PCF coloring $\vp$ of $G*S$, let $v \in S$ and $u \in N_G(v) \setminus S$.
If vertices in $N_G(u) \setminus S$  receive distinct colors (in particular if $u$ is a $3$-vertex), then let $B_{S}(u) = \{\vp(u), \vp_1(u), \vp_2(u)\}$. 
(If either $\vp_1(u)$ or $\vp_2(u)$ is not defined, then ignore it.)
If there is a repeated color among vertices in $N_G(u) \setminus S$, then let $B_S(u) = \{\vp(u)\} \cup \vp(N_{G-S}(u))$.
Notice that for $u \in V(G*S)$ with a neighbor in $S$
\begin{align}\label{eq:3}
    |B_S(u)| \le 3
\end{align}
if $G$ has maximum degree at most $4$.
Let $C_{G*S}(v) = \bigcup_{u \in N_{G}(v) \setminus S} B_S(u)$.
By \eqref{eq:3}, $|C_{G*S}(v)| \le 3|N_{G}(v) \setminus S|$ when $G$ has maximum degree at most $4$. 
Moreover, if $\vp$ assigns a color not in $C_{G*S}(v)$ to $v$, then two unique colors are guaranteed for vertices in $N_G(v) \setminus S$ and $\vp$ is still a (partial) proper coloring.

\section{Proof of Theorem~\ref{thm:main}}


Let $G$ be a counterexample to Theorem~\ref{thm:main} with the minimum number of vertices. 
We first prove a sequence of claims regarding the structure of $G$.

\begin{claim}\label{clm:3vx}
$G$ does not have a $2^-$-vertex. 
\end{claim}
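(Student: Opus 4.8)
The plan is a minimal-counterexample reduction built on the $S$-reduction machinery introduced above. Suppose for contradiction that $G$ has a $2^{-}$-vertex $v$, and put $S=\{v\}$. Then every vertex of $S$ has at most two neighbors outside $S$, so we may form the $S$-reduced graph $G*S$; it is planar, has exactly $|V(G)|-1$ vertices, and has maximum degree at most $\Delta(G)$. By the minimality of $G$, the graph $G*S$ has a $2$-PCF $9$-coloring $\vp$. Concretely, if $d_G(v)\le 1$ then $G*S=G-v$, while if $d_G(v)=2$ with $N_G(v)=\{u_1,u_2\}$ then $G*S$ is $G-v$ together with the edge $u_1u_2$ added if it was not already present.

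I would then extend $\vp$ to $v$. The colors we must avoid on $v$ are exactly those in $C_{G*S}(v)=\bigcup_{u\in N_G(v)\setminus S}B_S(u)$, and, since $G$ has maximum degree at most $4$, \eqref{eq:3} gives $|C_{G*S}(v)|\le 3\,|N_G(v)\setminus S|=3\,d_G(v)\le 6<9$. Choose a color $c\notin C_{G*S}(v)$ and set $\vp(v)=c$. By the ``Moreover'' observation following \eqref{eq:3}, $\vp$ is now a proper coloring of $G$ and every vertex of $N_G(v)\setminus S$ still has two unique colors, so the $2$-PCF condition holds at every neighbor of $v$; it also holds at every other vertex, since such a vertex has the same neighborhood, with the same colors, in $G$ and in $G*S$. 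It remains to check $v$ itself: this is vacuous when $d_G(v)\le 1$, and when $d_G(v)=2$ it holds because $u_1u_2\in E(G*S)$ forces $\vp(u_1)\neq\vp(u_2)$, so $N_G(v)$ carries two unique colors. Hence $\vp$ is a $2$-PCF $9$-coloring of $G$, contradicting the choice of $G$.

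The one genuinely delicate point is the middle step: recoloring $v$ could in principle destroy the color that is the sole witness of one of the two unique colors required at $u_1$ or $u_2$. This is precisely what $B_S(u)$ is built to record, and what the bound $|B_S(u)|\le 3$ --- valid exactly because $\Delta(G)\le 4$ --- keeps under control, leaving slack in $|C_{G*S}(v)|\le 6<9$. The other thing worth flagging is that inserting the edge $u_1u_2$ into $G*S$ in the degree-$2$ case is what secures $v$'s own pair of unique colors at no cost, which is why the $S$-reduction, rather than plain deletion, is the right operation there.
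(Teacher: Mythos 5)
Your proof is correct and follows essentially the same route as the paper: take $S=\{v\}$, obtain a $2$-PCF $9$-coloring of the $S$-reduced graph by minimality, and color $v$ with a color outside $C_{G*S}(v)$, whose size is at most $6<9$. The paper states this tersely; your extra verification (properness, the neighbors via the $B_S$ sets, and the condition at $v$ itself via the added edge $u_1u_2$) is exactly the content the paper leaves implicit.
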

\begin{proof}
Let $v$ be a vertex of minimum degree in $G$. 
Suppose $v$ is a $2^-$-vertex. 
For $S=\{v\}$, let $H$ be the $S$-reduced graph. 
By the minimality of $G$, $H$ has a $2$-PCF $9$-coloring $\vp$.
Extend $\vp$ to all of $G$ by 
coloring $v$ with a color not in $C_H(v)$.
Now $\vp$ is a $2$-PCF $9$-coloring of $G$, which is a contradiction.
\end{proof}

\begin{figure}[h]
\centering
\begin{tikzpicture}
\draw (90:2) -- (90:1);
\draw (90:1) -- (-30:1) -- (210:1) -- cycle;
\draw (-30:1) -- (-20:2);
\draw[dotted] (-30:1) -- (-40:2);
\draw (210:1) -- (200:2);
\draw[dotted] (210:1) -- (220:2);
\draw (90:2) to [shift={(90:2)}](135:0.7);
\draw[dotted] (90:2) to [shift={(90:2)}](45:0.7);
\draw (90:2) to [shift={(90:2)}](90:0.6);
\filldraw[fill=black, draw=black] (90:1) circle (0.05) node [left] {$x$};
\filldraw[fill=white, draw=black] (90:2) circle (0.05) node [left] {$x_1$};
\filldraw[fill=white, draw=black] (210:1) circle (0.05) node [above left] {$y$};
\filldraw[fill=white, draw=black] (200:2) circle (0.05) node [left] {$y_1$};
\filldraw[fill=white, draw=black] (220:2) circle (0.05) node [left] {$y_2$};
\filldraw[fill=white, draw=black] (-30:1) circle (0.05) node [above right] {$z$};
\filldraw[fill=white, draw=black] (-20:2) circle (0.05) node [right] {$z_1$};
\filldraw[fill=white, draw=black] (-40:2) circle (0.05) node [right] {$z_2$};
\draw (0,-1.5) node{(a)};
\end{tikzpicture}
\qquad \qquad \qquad
\begin{tikzpicture}
\draw (90:1) -- (100:2);
\draw (90:1) -- (80:2);
\draw (90:1) -- (-30:1) -- (210:1) -- cycle;
\draw (-30:1) -- (-20:2);
\draw (-30:1) -- (-40:2);
\draw (210:1) -- (200:2);
\draw (210:1) -- (220:2);
\filldraw[fill=black, draw=black] (90:1) circle (0.05) node [left] {$x$};
\filldraw[fill=white, draw=black] (100:2) circle (0.05) node [left] {$x_1$};
\filldraw[fill=white, draw=black] (80:2) circle (0.05) node [right] {$x_2$};
\filldraw[fill=black, draw=black] (210:1) circle (0.05) node [above left] {$y$};
\filldraw[fill=white, draw=black] (200:2) circle (0.05) node [left] {$y_1$};
\filldraw[fill=white, draw=black] (220:2) circle (0.05) node [left] {$y_2$};
\filldraw[fill=black, draw=black] (-30:1) circle (0.05) node [above right] {$z$};
\filldraw[fill=white, draw=black] (-20:2) circle (0.05) node [right] {$z_1$};
\filldraw[fill=white, draw=black] (-40:2) circle (0.05) node [right] {$z_2$};
\draw (0,-1.5) node{(b)};
\end{tikzpicture}
\caption{A $3$-cycle with a $3$-vertex and a $3$-cycle with no $3$-vertex}
\label{fig:triangle3}
\end{figure}
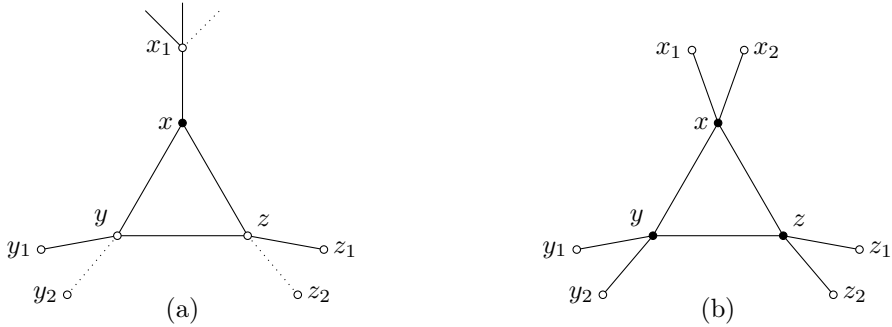

\begin{claim}\label{clm:triangle3}
$G$ does not have a $3$-cycle with a $3$-vertex.
\end{claim}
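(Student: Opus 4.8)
The plan is to delete the triangle together with the pendant $3$-vertex and show that any $2$-PCF $9$-coloring of the resulting smaller planar graph extends. Suppose for contradiction that $xyz$ is a $3$-cycle in $G$ with $d(x)=3$, and let $x_1$ be the third neighbor of $x$. By Claim~\ref{clm:3vx} we have $3\le d(y),d(z)\le 4$, and we may relabel so that $d(y)\ge d(z)$. Denote by $y_1$ (and $y_2$, present iff $d(y)=4$) the neighbors of $y$ outside $\{x,z\}$, and likewise $z_1$ (and $z_2$). Since $d(x)=3$ and $d(y),d(z)\le 4$, every vertex of $S:=\{x,y,z\}$ has at most two neighbors outside $S$, so $H:=G*S$ is a planar graph of maximum degree at most $4$ with fewer vertices than $G$; by minimality it has a $2$-PCF $9$-coloring $\vp$. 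Before this I would dispose of the degenerate configurations — a coincidence among $x_1,y_1,y_2,z_1,z_2$, an edge among them, or one of them adjacent to two vertices of the triangle — each of which either exhibits a smaller instance of the same forbidden configuration (for instance, $x_1=y_1$ again gives a $3$-cycle through the $3$-vertex $x$) or only simplifies $H$. From now on assume these five vertices are distinct and each has exactly one neighbor in $S$.

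The structural facts I would use are the following. Each of $x_1$ (resp. $y_1,y_2,z_1,z_2$) is adjacent in $G$ to exactly one vertex of $S$, namely $x$ (resp. $y,y,z,z$), so $B_S(x_1),B_S(y_1),B_S(y_2),B_S(z_1),B_S(z_2)$ are all defined, each has size at most $3$ by \eqref{eq:3}, and assigning to the corresponding vertex of $S$ a colour outside its $B_S$-set keeps that external vertex properly coloured and $2$-PCF in $G$ (since, up to the reduction, its neighbourhood only loses its ``via-$S$'' mate and gains one vertex of $S$). The one extra ingredient is that if $d(y)=4$ then $y_1y_2\in E(H)$, hence $\vp(y_1)\neq\vp(y_2)$, and similarly for $z$; this is what produces the slack needed below.

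I would then extend $\vp$ to $G$ by colouring $y$, then $z$, then $x$, at each step choosing a colour outside a set that I claim has size at most $8$. For $y$: forbid $B_S(y_1)\cup B_S(y_2)$ (protecting $y_1,y_2$), forbid $\vp(x_1)$ (so the neighbourhood $\{x_1,y,z\}$ of $x$ can be made rainbow, which is what $2$-PCF at $x$ requires), and, if $d(z)=3$, forbid $\vp(z_1)$; this is at most $6+1+1=8$. For $z$: forbid $B_S(z_1)\cup B_S(z_2)$, $\vp(x_1)$, $\vp(y)$, and, if $d(y)=3$, $\vp(y_1)$ — and here $d(y)\ge d(z)$ is essential, since if $d(z)=4$ then $d(y)=4$ and the last term is absent, giving at most $6+1+1=8$, while if $d(z)=3$ the $B_S$-part drops to $3$. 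Finally for $x$: forbid $B_S(x_1)$ and $\vp(y),\vp(z)$; the only remaining requirements are $2$-PCF at $y$ and at $z$, and when $d(y)=4$ the condition at $y$ reduces (because $\vp(y_1)\neq\vp(y_2)$) to $\{\vp(x),\vp(z)\}\neq\{\vp(y_1),\vp(y_2)\}$, which forbids at most one further colour for $\vp(x)$, and similarly for $z$; when $d(y)=d(z)=3$ each costs only the single colour $\vp(y_1)$ resp. $\vp(z_1)$. So at most $3+2+1+1=7$ colours are forbidden for $x$. One then verifies routinely that the extended colouring is proper and $2$-PCF everywhere — vertices untouched by the reduction have unchanged coloured neighbourhoods, the five external vertices are handled by their $B_S$-sets, and $x,y,z$ by the conditions just imposed — contradicting the minimality of $G$.

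The main obstacle is that these counts are essentially tight: with a careless order or a careless accounting one reaches exactly $9$ forbidden colours at some step. The two points that pull the count below $9$ are (i) colouring the triangle vertices of larger degree first, and (ii) using that a degree-$4$ triangle vertex forces its two outside neighbours to receive distinct colours in $H$, which downgrades its $2$-PCF obligation from ``avoid two named colours'' to ``avoid one named two-element pattern''. Carrying out this bookkeeping cleanly, and efficiently clearing away the degenerate overlap cases, is where the real work lies.
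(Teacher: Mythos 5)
Your proposal is correct in substance but takes a genuinely different route from the paper's proof. The paper splits into cases according to the degree of $x_1$: when $x_1$ is a $3$-vertex it reduces on $S=\{x,x_1\}$, and only when $x_1$ is a $4$-vertex does it reduce on the triangle $\{x,y,z\}$, with a further subcase on whether $y$ is a $3$-vertex; at its final step it protects the $2$-PCF condition at $y$ and $z$ by forbidding all of $\vp(y_1),\vp(y_2),\vp(z_1),\vp(z_2)$ for $x$, and this is precisely why a degree-$3$ vertex $x_1$ (whose protection costs the three colors of $B_S(x_1)$) cannot be absorbed there and forces the separate reduction. You dispense with the case analysis by two devices the paper does not use: coloring the triangle vertex of larger degree first, so that the extra color $\vp(y_1)$ needed to keep a degree-$3$ neighbor's neighborhood rainbow is only charged at a step with slack (in the reverse order the count for the second vertex reaches $9$), and exploiting that the reduction edge $y_1y_2$ forces $\vp(y_1)\neq\vp(y_2)$, so the $2$-PCF condition at a degree-$4$ triangle vertex excludes at most one color for $x$ once $\vp(z)$ is fixed, rather than two. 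This gives uniform counts of at most $8,8,7$ and a shorter, single-pass argument, at the price of slightly subtler bookkeeping; the paper's version is more explicit case by case. One caveat: your dismissal of the degenerate configurations is not sound as phrased. If, say, $x_1=y_1$, you have merely exhibited another $3$-cycle through a $3$-vertex in the same minimal counterexample $G$, not a smaller instance, so no contradiction follows from that observation alone; coincidences such as a common outside neighbor of $y$ and $z$ also change which colors the $B_S$-sets actually control. These cases should be handled by rerunning your count with the identifications in place (which does work, and the paper likewise leaves such coincidences implicit), but as written this step is a gap in your argument rather than a disposal of it.
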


\begin{proof}
Suppose $G$ contains a $3$-cycle $T:xyz$ where $x$ is a $3$-vertex. 
Let $x_1, y_1, z_1$ be neighbors of $x,y,z$, respectively, not on $T$. 
Let $y_2$ (resp. $z_2$) be the neighbor of $y$ (resp. $z$) that is neither on $T$ nor $y_1$ (resp. $z_1$) if $y$ (resp. $z$) is a $4$-vertex. 
See Figure~\ref{fig:triangle3}(a).

Suppose $x_1$ is a $3$-vertex. 
For $S = \{x, x_1\}$, let $H$ be the $S$-reduced graph.
By the minimality of $G$, $H$ has a $2$-PCF $9$-coloring $\vp$.
Now extend $\vp$ to all of $G$ as follows: color $x_1$ with a color not in $C_{H}(x_1) \cup \{\vp(y), \vp(z)\}$ to guarantee two (actually three) unique colors for $x$, and color $x$ with a color not in $\{\vp(x_1), \vp_1(x_1), \vp_2(x_1), \vp(y), \vp(z), \vp(y_1), \vp(z_1)\}$ to guarantee two  (actually three) unique colors for $x_1$. 
Thus $\vp$ is a $2$-PCF $9$-coloring of $G$, which is a contradiction. 

Now we know $x_1$ is a $4$-vertex. 
For $S'=\{x,y,z\}$, let $H'$ be the $S'$-reduced graph.
By the minimality of $G$, $H'$ has a $2$-PCF $9$-coloring $\vp'$.
Since $x_1$ has three unique colors at this point, at least two unique colors for $x_1$ are guaranteed regardless of the color assigned to $x$. 

Suppose $y$ is a $3$-vertex. 
Then color $z$ with a color not in $C_{H'}(z) \cup \{\vp'(x_1), \vp'(y_1)\}$, and color $y$ with a color not in $C_{H'}(y) \cup \vp'(N_{G}(z) \setminus S') \cup \{\vp'(x_1), \vp'(z)\}$.
At this point $x$ has two (actually three) unique colors. 
Color $x$ with a color not in $\{\vp'(x_1), \vp'(y), \vp'(z), \vp'(y_1)\} \cup \vp'(N_{G}(z) \setminus S')$ to guarantee two unique colors for each of $y$ and $z$
Thus $\vp'$ is a $2$-PCF $9$-coloring of $G$, which is a contradiction. 

By symmetry, we may assume both $y$ and $z$ are $4$-vertices. Now, color $y$ with a color not in $C_{H'}(y) \cup \{\vp'(x_1)\}$, and color $z$ with a color not in $C_{H'}(z) \cup \{\vp'(y), \vp'(x_1)\}$.
This guarantees two (actually three) unique colors for $x$. 
Color $x$ with a color not in $\{\vp'(x_1), \vp'(y), \vp'(y_1), \vp'(y_2), \vp'(z), \vp'(z_1), \vp'(z_2)\}$ to guarantee an additional unique color for each of $y$ and $z$.
Note that each of $y$ and $z$ already had a unique color in $N_G(y)\setminus\{x\}$ and $N_G(z)\setminus\{x\}$, respectively, since $H'$ is an $S'$-reduced graph. 
Then $\vp'$ is a $2$-PCF $9$-coloring of $G$, which is a contradiction.

\end{proof}

\begin{claim}\label{clm:triangle}
$G$ does not have a $3$-cycle.
\end{claim}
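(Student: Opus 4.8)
The plan is to show that, given Claims~\ref{clm:3vx} and~\ref{clm:triangle3}, any remaining $3$-cycle can be reduced. After the previous two claims we know every vertex of $G$ has degree exactly $4$ (Claim~\ref{clm:3vx} gives no $2^-$-vertices; a $3$-vertex inside a $3$-cycle is forbidden by Claim~\ref{clm:triangle3}, but we may still have $3$-vertices not on any triangle — so more carefully, we only know a triangle $T:xyz$ consists of three $4$-vertices, which is exactly case (b) of Figure~\ref{fig:triangle3}). So suppose $T:xyz$ is a $3$-cycle with $x,y,z$ all $4$-vertices; let $x_1,x_2$ be the two neighbors of $x$ off $T$, and similarly $y_1,y_2$ and $z_1,z_2$.

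First I would take $S = \{x,y,z\}$, form the $S$-reduced graph $H = G*S$, and apply minimality to get a $2$-PCF $9$-coloring $\vp$ of $H$. Note that in $H$ each of $x_1, x_2$ is joined to the (at most two) other off-$T$ neighbors of $x$, and these vertices already have two unique colors in $H$; the contractions merging the off-triangle neighbors of a common $T$-vertex are what we exploit. The goal is to color $x,y,z$ back in so that (i) the coloring stays proper on $T$ and on the six pendant edges, (ii) each of $x_1,x_2,y_1,y_2,z_1,z_2$ retains at least two unique colors, and (iii) each of $x,y,z$ gets two unique colors in its own neighborhood. For (iii), observe that $x$ sees $y,z$ (which will get distinct colors) plus $x_1,x_2$; giving $x$ a color making $\{\vp(y),\vp(z)\}$ or $\{\vp(x_1),\vp(x_2)\}$-type uniqueness work is easy because $y$ and $z$ already differ. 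The main point is a counting argument: coloring $x$ forbids the colors in $C_H(x) \cup \{\vp(y),\vp(z)\}$ together with a bounded number of colors needed to preserve uniqueness at $y$, $z$ and their pendants; since $|C_H(x)| \le 3|N_G(x)\setminus S| = 3$ (by the bound $|B_S(u)|\le 3$ preceding the proof) and the remaining constraints contribute only a few more forbidden colors, $9$ colors suffice if the arithmetic is arranged in the right order.

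The key steps, in order, would be: (1) record that $x,y,z$ are all $4$-vertices and set up $S=\{x,y,z\}$, $H=G*S$, $\vp$; (2) color $z$ first avoiding $C_H(z)$ and the pendant colors $\{\vp(x_1),\vp(x_2)\}$-analogues needed to keep $x$'s future uniqueness, i.e. avoid $C_H(z)\cup\{\vp(x_1),\vp(y_1)\}$ or similar; (3) color $y$ avoiding $C_H(y)$, the freshly chosen $\vp(z)$, and a bounded set of colors protecting $x$; (4) color $x$ last avoiding $\{\vp(y),\vp(z)\}$ together with the $\le 6$ pendant colors $\vp(x_1),\vp(x_2),\vp(y_1),\vp(y_2),\vp(z_1),\vp(z_2)$ minus whatever coincidences we engineered — noting as in the last paragraph of the proof of Claim~\ref{clm:triangle3} that each of $y,z$ already owns a unique color among its off-$T$ neighbors because $H$ is an $S$-reduced graph, so $x$ only needs to supply \emph{one} additional unique color to each. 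At each stage verify the forbidden set has size $\le 8$.

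The main obstacle I anticipate is step (4): when all three vertices of $T$ are $4$-vertices, $x$ has six pendant neighbors $x_1,x_2,y_1,y_2,z_1,z_2$ whose colors must (together with $\vp(y),\vp(z)$) leave $x$ a legal choice and simultaneously guarantee the second unique color at $y$ and at $z$. Naively that is $6+2=8$ forbidden colors, which is tight, so the argument cannot afford any slack and must use the $S$-reduced structure (the pre-existing unique colors at $y$ and $z$) to cut the list down, exactly the trick flagged in Claim~\ref{clm:triangle3}. If even that is not enough in some sub-case, the fallback is to also merge a pendant neighbor into $S$ (as was done with $x_1$ in Claim~\ref{clm:triangle3}) and redo the count, or to split into sub-cases according to whether certain pendant neighbors coincide or are themselves adjacent. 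I expect the write-up to mirror the three-case structure (a $3$-vertex among $x_1$; then $y$ a $3$-vertex; then all $4$-vertices) already used in the proof of Claim~\ref{clm:triangle3}.
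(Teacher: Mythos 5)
There is a genuine gap, and it sits exactly at the point you flagged but did not resolve. Your plan colors $x$ \emph{last}, but the uniqueness requirement at $x$ itself cannot be repaired by the choice of $\vp(x)$: once $y$ and $z$ are colored, $N_G(x)=\{y,z,x_1,x_2\}$ is fully colored, and if $\{\vp(y),\vp(z)\}=\{\vp(x_1),\vp(x_2)\}$ then every color in $N_G(x)$ appears twice and $x$ has no unique color, no matter what you assign to $x$. Your remark that this ``is easy because $y$ and $z$ already differ'' is therefore incorrect; you must actively steer $\vp(y),\vp(z)$ away from the pair $\{\vp(x_1),\vp(x_2)\}$ while coloring $y$ and $z$, and the honest count for that (e.g.\ $C_H(y)$ of size up to $6$ together with $\vp(z),\vp(x_1),\vp(x_2)$) can reach $9$, leaving no legal color. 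These tight cases are real, not an artifact of a careless count, and your proposed fallbacks (``merge a pendant into $S$'' or ``split on coincidences'') are not worked out and do not obviously close them; also, your anticipated three-case split by degrees does not occur here, since Claim~\ref{clm:triangle3} already forces all of $x,y,z$ to be $4$-vertices.

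The paper's proof handles precisely these tight situations with two specific devices that are absent from your proposal. It colors $x$ \emph{first} whenever $|C'|\le 8$, where $C'=C_H(x)\cup\{\vp(y_1),\vp(y_2),\vp(z_1),\vp(z_2)\}$, so that $\vp(x)$ is fresh at both $y$ and $z$ and their uniqueness is secured up front; in the tight subcase $|C_H(y)\cup\{\vp(x),\vp(x_1),\vp(x_2)\}|=|C_H(z)\cup\{\vp(x),\vp(x_1),\vp(x_2)\}|=9$ it \emph{uncolors} $x$ and reassigns $y$ and $z$ the specific colors $\vp(x)$ and $\vp(x_1)$ before recoloring $x$; and when $|C'|=9$ it uses a pigeonhole observation to find a pendant color (say $\vp(y_1)$) occurring only once in the relevant second neighborhood and deliberately \emph{reuses} it on $x$, then protects $x$'s uniqueness by a case split on whether $\vp(z)\in\{\vp(x_1),\vp(x_2)\}$. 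Without replacements for these maneuvers, your greedy ordering does not go through.
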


\begin{proof}
Suppose $G$ contains a $3$-cycle $T:xyz$.
By Claim~\ref{clm:triangle3}, all vertices on $T$ are $4$-vertices. 
Let $x_1, x_2$, and $y_1, y_2$, and $z_1, z_2$ be the neighbors of $x$ and $y$ and $z$, respectively, not on $T$. 
See Figure~\ref{fig:triangle3}(b).
For $S = \{x,y,z\}$, let $H$ be the $S$-reduced graph.
By the minimality of $G$, $H$ has a $2$-PCF $9$-coloring $\vp$.
Let $C'=C_{H}(x) \cup \{\vp(y_1),\vp(y_2),\vp(z_1),\vp(z_2)\}$.

Suppose $|C'| \le 8$.
First color $x$ with a color not in $C'$ to guarantee three unique colors for each of $y$ and $z$, so at least two unique colors are guaranteed for $y$ and $z$ regardless of the colors assigned to $y$ and $z$. 

If $|C_{H}(y) \cup \{\vp(x), \vp(x_1), \vp(x_2)\}| \le 8$, then color $y$ with a color not in $C_{H}(y) \cup \{\vp(x), \vp(x_1), \vp(x_2)\}$, guaranteeing three unique colors for $x$, so at least two unique colors are guaranteed for $x$ regardless of the color assigned to $z$. 
Now color $z$ with a color not in $C_{H}(z) \cup \{\vp(x), \vp(y)\}$.
Now, $\vp$ is a $2$-PCF $9$-coloring of $G$, which is a contradiction.

Thus, by symmetry, we may assume $|C_{H}(y) \cup \{\vp(x), \vp(x_1), \vp(x_2)\}| = |C_{H}(z) \cup \{\vp(x), \vp(x_1), \vp(x_2)\}| = 9$.
Without loss of generality, assume $\vp(x_i) = i$ for $i \in \{1,2\}$, $\vp(x)=3$, and $C_{H}(y)=C_{H}(z)=\{4,5,6,7,8,9\}$.
Delete the color on $x$ and color $y$ with $3$ and $z$ with $1$ to guarantee two unique colors for $x, y, z$.
Now color $x$ with a color not in $C_{H}(x) \cup \{\vp(y), \vp(z)\}$ to obtain a $2$-PCF $9$-coloring of $G$, which is a contradiction.

Now we know, $|C'|=9$, so either $\vp(y_1)$ or $\vp(y_2)$ appears only once on $N_G(\{x_1,x_2,x,y,z\})\setminus S$.
Without loss of generality, assume $\vp(y_1)$ appears only once on $N_G(\{x_1,x_2,x,y,z\})\setminus S$.
Color $x$ with $\vp(y_1)$, guaranteeing the three unique colors for $z$.
Color $z$ with a color not in $C_{H}(z) \cup \{\vp(y_1), \vp(y_2)\}$, guaranteeing two unique colors for $y$.

If $\vp(z) \notin \{\vp(x_1), \vp(x_2)\}$, then $x$ has three unique colors, so coloring $y$ with a color not in $C_{H}(y) \cup \{\vp(z)\}$ guarantees at least two unique colors for $x$. 
If $\vp(z) \in \{\vp(x_1), \vp(x_2)\}$, then color $y$ with a color not in $C_{H}(y) \cup \{\vp(x_1), \vp(x_2)\}$, guaranteeing an additional unique color for $x$.
Note that $N_G(x)\setminus \{y\}$ already has a unique color since $H$ is an $S$-reduced graph. 
In all cases, $\vp$ is a $2$-PCF $9$-coloring of $G$, which is a contradiction.
\end{proof}

\begin{figure}[h]
\centering
\begin{tikzpicture}[scale=0.9]
\filldraw[fill=black, draw=black] (-1,0) circle (0.05) node [above] {$x$};
\filldraw[fill=black, draw=black] (0,0) circle (0.05) node [above] {$y$};
\filldraw[fill=black, draw=black] (1,0) circle (0.05) node [above] {$z$};
\filldraw[fill=black, draw=black] (0,-1) circle (0.05) node [above left] {$y_1$};
\draw (-1,0) -- (1,0);
\draw (-1.5,0) -- (-1,0);
\draw (-1,-0.5) -- (-1,0);
\draw (1.5,0) -- (1,0);
\draw (1,-0.5) -- (1,0);
\draw (0,0) -- (0,-1);
\draw (0,-1) -- (-0.3,-1.3);
\draw (0,-1) -- (0,-1.4);
\draw (0,-1) -- (0.3,-1.3);
\draw (0,-2) node {(a)};
\end{tikzpicture}
\qquad 
\begin{tikzpicture}[scale=0.9]
\draw (-1.5,1) -- (2.5,1);
\draw (-1,0.5) -- (-1,1);
\draw (1,0) -- (1,1);
\draw (0,1) -- (0,0);
\draw (0,0) -- (-0.3,-0.3);
\draw (0,0) -- (0.3,-0.3);
\draw (1,0) -- (0.7,-0.3);
\draw (1,0) -- (1.3,-0.3);
\draw (2,1) -- (2,0.5);
\filldraw[fill=black, draw=black] (-1,1) circle (0.05) node [above] {$x$};
\filldraw[fill=black, draw=black] (0,1) circle (0.05) node [above] {$y$};
\filldraw[fill=black, draw=black] (1,1) circle (0.05) node [above] {$z$};
\filldraw[fill=black, draw=black] (2,1) circle (0.05) node [above] {$w$};
\filldraw[fill=black, draw=black] (0,0) circle (0.05) node [above left] {$y_1$};
\filldraw[fill=black, draw=black] (1,0) circle (0.05) node [above right] {$z_1$};
\draw (0.5,-1) node{(b)};
\end{tikzpicture}
\qquad 
\begin{tikzpicture} [scale=0.9]
\draw (0,0) -- (1,0) -- (1,1) -- (0,1) -- cycle;
\draw (0,1) -- (0,2);
\draw (1,1) -- (1,2);
\draw (1,0) -- (2,0);
\draw (1,0) -- (-1,0);
\draw (1,0) -- (1,-1);
\draw[dotted] (0,0) -- (0,-1);
\filldraw[fill=black, draw=black] (0,0) circle (0.05) node [above left] {$w$};
\filldraw[fill=black, draw=black] (1,0) circle (0.05) node [above right] {$z$};
\filldraw[fill=black, draw=black] (1,1) circle (0.05) node [right] {$y$};
\filldraw[fill=black, draw=black] (0,1) circle (0.05) node [left] {$x$};
\filldraw[fill=white, draw=black] (0,2) circle (0.05) node [left] {$x_1$};
\filldraw[fill=white, draw=black] (1,2) circle (0.05) node [right] {$y_1$};
\filldraw[fill=white, draw=black] (2,0) circle (0.05) node [right] {$z_1$};
\filldraw[fill=white, draw=black] (1,-1) circle (0.05) node [right] {$z_2$};
\filldraw[fill=white, draw=black] (-1,0) circle (0.05) node [left] {$w_1$};
\filldraw[fill=white, draw=black] (0,-1) circle (0.05) node [left] {$w_2$}; 
\draw (0.5,-1.5) node {(c)};
\end{tikzpicture}
\qquad
\begin{tikzpicture}[scale=0.9] 
\draw (-1.5,1) -- (1.5,1);
\draw (-1,1.5) -- (-1, 0);
\draw (-1.5,0) -- (1.5,0);
\draw (1,1.5) -- (1,0);
\draw (0,1) -- (0,-1);
\draw[dotted] (-1,0) -- (-1,-0.5);
\draw[dotted] (1,0) -- (1,-0.5);
\filldraw[fill=black, draw=black] (0,0) circle (0.05) node [below left] {$z$};
\filldraw[fill=black, draw=black] (1,0) circle (0.05) node [below right] {$w$};
\filldraw[fill=black, draw=black] (1,1) circle (0.05) node [above right] {$u$};
\filldraw[fill=black, draw=black] (0,1) circle (0.05) node [above] {$v$};
\filldraw[fill=black, draw=black] (-1,1) circle (0.05) node [above left] {$x$};
\filldraw[fill=black, draw=black] (-1,0) circle (0.05) node [below left] {$y$};
\filldraw[fill=white, draw=black] (0,-1) circle (0.05) node [right] {$z_1$};
\draw (0, -1.5) node {(d)};
\end{tikzpicture}
\caption{Figures for Claims~\ref{clm:333-4},~\ref{clm:3333},~\ref{clm:3344}, and~\ref{clm:4faces}}
\label{fig:3vx}
\end{figure}
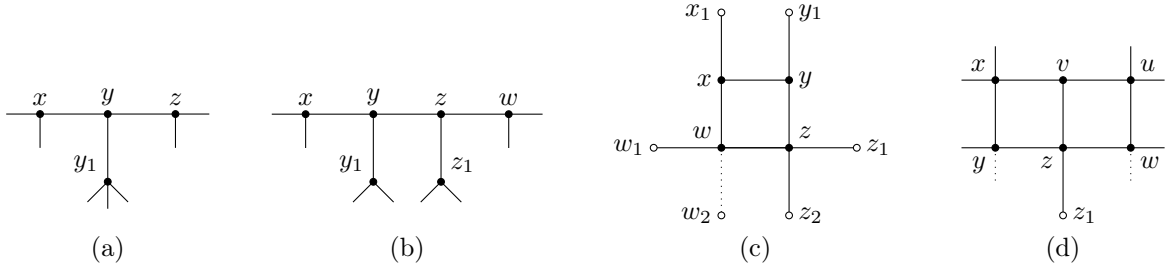

\begin{claim}\label{clm:333-4}
$G$ does not have a path on three $3$-vertices where the middle vertex is adjacent to a $4$-vertex.
\end{claim}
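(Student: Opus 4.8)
The plan is to argue by minimality. Write the path as $xyz$ with $x,y,z$ all $3$-vertices and $xy,yz\in E(G)$; since $x$ and $z$ are $3$-vertices, the third neighbor $y_1$ of $y$ is the $4$-vertex provided by the hypothesis. Let $x_1,x_2$ (respectively $z_1,z_2$) be the neighbors of $x$ (respectively $z$) other than $y$. As $G$ is triangle-free by Claim~\ref{clm:triangle}, $y_1$ is distinct from each of $x_1,x_2,z_1,z_2$. I would take $S=\{x,y,z\}$: each vertex of $S$ has at most two neighbors outside $S$ (two for $x$ and $z$, one for $y$), so $H=G*S$ is well defined, planar, of maximum degree at most $4$, and by the minimality of $G$ it has a $2$-PCF $9$-coloring $\vp$. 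Two features of $H$ will be used repeatedly: the edges $x_1x_2$ and $z_1z_2$ belong to $H$ (created by the common neighbors $x$ and $z$), so $\vp(x_1)\ne\vp(x_2)$ and $\vp(z_1)\ne\vp(z_2)$; and since $y_1$ is a $4$-vertex, $C_H(y)=B_S(y_1)$ has size at most $3$ by \eqref{eq:3}, while $|C_H(x)|,|C_H(z)|\le 6$.

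I would extend $\vp$ by coloring $x$, then $z$, then $y$. Coloring $x$ with a color outside $C_H(x)\cup\{\vp(y_1)\}$ keeps $\vp$ proper at $x$, preserves the $2$-PCF condition at $x_1$ and $x_2$, and keeps $\vp(x)\ne\vp(y_1)$; at most $7$ colors are forbidden, so at least two remain. Then coloring $z$ with a color outside $C_H(z)\cup\{\vp(y_1),\vp(x)\}$ does the analogous thing for $z,z_1,z_2$ and makes $\vp(x),\vp(z),\vp(y_1)$ pairwise distinct; at most $8$ colors are forbidden. Finally $y$ must avoid $C_H(y)\cup\{\vp(x),\vp(z)\}\cup\{\vp(x_1),\vp(x_2),\vp(z_1),\vp(z_2)\}$: the first part protects $y_1$, the middle pair gives properness at $x$ and $z$, and the last four colors force the $2$-PCF conditions at $x$ and at $z$. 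A priori this set has size $9$, and the whole point of the proof is to reduce it to $8$.

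I would argue as follows. If $|B_S(y_1)|\le 2$, or the colors $\vp(x_1),\vp(x_2),\vp(z_1),\vp(z_2)$ are not pairwise distinct (in particular if two of $x_1,x_2,z_1,z_2$ coincide), then the forbidden set of $y$ already has size at most $8$ and we are done. Otherwise $|B_S(y_1)|=3$ and those four colors are distinct; when coloring $x$ I would try to pick an admissible color lying in $\{\vp(z_1),\vp(z_2)\}\cup\bigl(B_S(y_1)\setminus\{\vp(y_1)\}\bigr)$, because such a $\vp(x)$ contributes nothing new to the forbidden set of $y$, bringing it to size at most $8$. Symmetrically, I would try to place $\vp(z)$ in $\{\vp(x_1),\vp(x_2)\}\cup\bigl(B_S(y_1)\setminus\{\vp(y_1)\}\bigr)$. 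These attempts fail only when $\{\vp(z_1),\vp(z_2)\}\cup\bigl(B_S(y_1)\setminus\{\vp(y_1)\}\bigr)\subseteq C_H(x)\cup\{\vp(y_1)\}$ and $\{\vp(x_1),\vp(x_2)\}\cup\bigl(B_S(y_1)\setminus\{\vp(y_1)\}\bigr)\subseteq C_H(z)\cup\{\vp(y_1)\}$; together with $|C_H(x)|,|C_H(z)|\le 6$ these containments essentially force $C_H(x)$ and $C_H(z)$ to equal $\{\vp(x_1),\vp(x_2),\vp(z_1),\vp(z_2)\}\cup\bigl(B_S(y_1)\setminus\{\vp(y_1)\}\bigr)$. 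In that rigid situation I would finish by an explicit assignment of the nine colors to $x,y,z$, recoloring $y_1$ as well if needed, in the spirit of the final paragraph of the proof of Claim~\ref{clm:triangle}.

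The main obstacle is exactly this rigid end case: when the sets $B_S(x_1),B_S(x_2),B_S(z_1),B_S(z_2)$ and $B_S(y_1)$ are as spread out as the bound \eqref{eq:3} allows, the greedy coloring order leaves $y$ with no available color, and one must instead reshuffle the colors already present (and possibly $\vp(y_1)$) so that every $2$-PCF condition holds simultaneously. Everything preceding that step is routine bookkeeping against \eqref{eq:3}.
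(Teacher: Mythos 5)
There is a genuine gap, and it stems from missing the one idea the hypothesis is there to provide. Since $y_1$ is a $4$-vertex of $G$ with exactly one neighbor ($y$) in $S=\{x,y,z\}$ (and $y_1$ is not adjacent to $x$ or $z$ by triangle-freeness), $y_1$ is a $3$-vertex in $H=G*S$. The $2$-PCF condition for a $3$-vertex forces all three of its neighbors to receive pairwise distinct colors, so the three $H$-neighbors of $y_1$ are rainbow, and consequently \emph{no} color assigned to $y$ can leave $y_1$ with fewer than two unique colors. Hence when you color $y$ you do not need to avoid $C_H(y)=B_S(y_1)$ at all; you only need properness at $y_1$ plus the constraints coming from $x$ and $z$, i.e.\ avoid $\{\vp(x),\vp(z),\vp(y_1),\vp(x_1),\vp(x_2),\vp(z_1),\vp(z_2)\}$, a set of size at most $7$. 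This is exactly how the paper finishes, and it is the only place the ``adjacent to a $4$-vertex'' hypothesis is used. Your proposal instead uses that hypothesis only through $|B_S(y_1)|\le 3$ --- a bound that would hold just as well if $y_1$ were a $3$-vertex --- so you inflate $y$'s forbidden set to a possible $9$ colors and are forced into the ``rigid end case.''

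That end case is precisely where your argument stops being a proof: ``finish by an explicit assignment of the nine colors to $x,y,z$, recoloring $y_1$ as well if needed'' is a plan, not an argument, and it is not routine. Recoloring $y_1$ drags in the properness and $2$-PCF constraints of $y_1$'s other three neighbors and of their neighbors, none of which you control, and your claim that the containments ``essentially force'' $C_H(x)$ and $C_H(z)$ to equal a specific $6$-set is also not justified (those sets can have as many as six elements with the stated containments giving only four of them). So as written the proposal does not close; the fix is not to fight the rigid case but to notice the rainbow neighborhood of $y_1$ in $H$, after which the greedy order $x$, $z$, $y$ you already set up completes the proof with room to spare.
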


\begin{proof}
Suppose $G$ has a path $xyz$ on three $3$-vertices where   the neighbor $y_1$ of $y$ other than $x$ and $z$ is a $4$-vertex. 
See Figure~\ref{fig:3vx}(a).
For $S = \{x,y,z\}$, let $H$ be the $S$-reduced graph.
By the minimality of $G$, $H$ has a $2$-PCF $9$-coloring $\vp$.
Color $x$ with a color not in $C_H(x) \cup \{\vp(y_1)\}$, and color $z$ with a color not in $C_H(z) \cup \{\vp(x), \vp(y_1)\}$ to guarantee two (actually three) unique colors for $y$. 
Since $y_1$ is a 3-vertex in $H$, $\vp(N_G(y_1))$ consists of three distinct colors and at least two unique colors for $y_1$ are guaranteed regardless of the color assigned to $y$. 
Color $y$ with a color not in $\vp((N_G(x) \cup N_G(z)) \setminus S) \cup \{\vp(x), \vp(y_1), \vp(z)\}$ to obtain a $2$-PCF $9$-coloring $\vp$ of $G$, which is a contradiction.
\end{proof}

\begin{claim}\label{clm:3333}
$G$ does not have a path on four 3-vertices.
\end{claim}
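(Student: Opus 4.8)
\medskip
\noindent\emph{Plan of proof.}
Assume for contradiction that $G$ has a path $v_1v_2v_3v_4$ with $d_G(v_i)=3$ for every $i$. Applying Claim~\ref{clm:333-4} to the subpaths $v_1v_2v_3$ and $v_2v_3v_4$ shows that the third neighbour $u_2$ of $v_2$ and the third neighbour $u_3$ of $v_3$ are again $3$-vertices; write $a,b$ for the neighbours of $v_1$ other than $v_2$, and $c,d$ for the neighbours of $v_4$ other than $v_3$. (The same claim applied to $av_1v_2$ and $bv_1v_2$ forces $a,b$ to be both $3$-vertices or both $4$-vertices, and likewise for $c,d$; this helps dispose of some exceptional cases.) Claim~\ref{clm:triangle} gives girth at least $4$, hence $u_2\neq u_3$, $u_2\notin\{v_1,v_3,v_4\}$, $u_3\notin\{v_1,v_2,v_4\}$, $v_3\notin\{a,b\}$ and $v_2\notin\{c,d\}$. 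The remaining possible coincidences — a $4$-cycle $v_1v_2v_3v_4$, or one of $u_3\in\{a,b\}$, $u_2\in\{c,d\}$, or a repetition among $a,b,c,d,u_2,u_3$ — each arise from a short cycle through the path and will be handled separately; I first treat the generic situation, in which $v_1,\dots,v_4,a,b,c,d,u_2,u_3$ are ten distinct vertices.

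Let $S=\{v_1,v_2,v_3,v_4\}$. Each vertex of $S$ has at most two neighbours outside $S$, so $H:=G*S$ is well defined, and by minimality of $G$ it has a $2$-PCF $9$-colouring $\vp$; the only edges of $H$ not already in $G-S$ are $ab$ (witnessed by $v_1$) and $cd$ (witnessed by $v_4$). I reinsert $v_1,v_2,v_3,v_4$ one vertex at a time, using repeatedly that colouring a vertex $v\in S$ with a colour outside $C_H(v)$ keeps two unique colours for every neighbour of $v$ outside $S$, and that each $v_i$, being a $3$-vertex, needs its three neighbours to receive pairwise distinct colours after reinsertion. Here $C_H(v_2)=B_S(u_2)$ and $C_H(v_3)=B_S(u_3)$ have size at most $3$ (each of $v_2,v_3$ has a single neighbour outside $S$, the $3$-vertex $u_2$, respectively $u_3$), whereas $C_H(v_1)=B_S(a)\cup B_S(b)$ and $C_H(v_4)=B_S(c)\cup B_S(d)$ have size at most $6$.

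Colour $v_1$ outside $C_H(v_1)\cup\{\vp(u_2)\}$ (at least two choices), then $v_4$ outside $C_H(v_4)\cup\{\vp(u_3)\}$ (at least two choices), and then $v_2$ outside $B_S(u_2)\cup\{\vp(v_1),\vp(a),\vp(b),\vp(v_4),\vp(u_3)\}$ (at most eight forbidden colours). A routine check shows the only $2$-PCF requirements not yet met are those at $v_3$, whose forbidden set works out to
\[
B_S(u_3)\cup\{\vp(v_2),\vp(v_4)\}\cup\{\vp(v_1),\vp(u_2)\}\cup\{\vp(c),\vp(d)\},
\]
of size at most $9$. The idea is to spend the freedom at $v_1$ to break this tie: rather than pick $\vp(v_1)$ arbitrarily, pick it inside $B_S(u_3)\cup\{\vp(c),\vp(d)\}$, which is available in the generic case because this set is disjoint from $C_H(v_1)\cup\{\vp(u_2)\}$. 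Then $\vp(v_1)$ coincides with a colour that $v_3$ must avoid anyway, so the forbidden set of $v_3$ has size at most $8$, and $v_3$ can be coloured, producing a $2$-PCF $9$-colouring of $G$ and the desired contradiction.

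The real work — and the main obstacle — is exactly this tie at $v_3$: as a $3$-vertex all of whose neighbours are $3$-vertices, $v_3$ has nine vertices within distance two, and a plain union bound over all the $2$-PCF requirements leaves no colour free. Breaking the tie forces one to track which colours sit in which $B_S$-set, not merely their sizes, and, when the substitution above is blocked (for instance when both admissible colours of $v_1$ happen to lie outside $B_S(u_3)\cup\{\vp(c),\vp(d)\}$), to fall back on the symmetric manoeuvre at $v_4$, or to uncolour $v_1$ and swap colours between two reinserted vertices — exactly in the spirit of the tight case inside the proof of Claim~\ref{clm:triangle}. A second, independent strand is the list of degenerate configurations not excluded by girth at least $4$; in each of them either $|C_H(v_1)|$ or $|C_H(v_4)|$ drops below $6$, or an extra edge of $H$ already merges two of the critical colours, and a short ad hoc argument closes the case.
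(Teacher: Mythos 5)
There is a genuine gap at the crux of your plan. You correctly identify that the last vertex $v_3$ faces up to nine forbidden colours, but the step you use to break the tie is unjustified: you assert that in the generic case (ten distinct vertices) the set $B_S(u_3)\cup\{\vp(c),\vp(d)\}$ is disjoint from $C_H(v_1)\cup\{\vp(u_2)\}$, so that $\vp(v_1)$ can be chosen inside the former. Distinctness of vertices gives no disjointness of colour sets: all of these colours come from the same palette of nine, and nothing prevents, say, $\{\vp(c),\vp(d)\}\subseteq\{\vp(a),\vp(b)\}$ and $B_S(u_3)\subseteq B_S(a)\cup B_S(b)\cup\{\vp(u_2)\}$, in which case every colour you want to reuse at $v_1$ is already forbidden for $v_1$. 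You acknowledge this yourself (``when the substitution above is blocked\ldots'') and fall back on a symmetric move at $v_4$ or an unspecified uncolour-and-swap, but neither is carried out, and it is not evident they always succeed: both substitutions can be blocked simultaneously, and the allowed sets for $v_1$ and $v_4$ may be disjoint, so you cannot even force $\vp(v_1)=\vp(v_4)$. Likewise the degenerate configurations (the $4$-cycle $v_1v_2v_3v_4$, coincidences among $a,b,c,d,u_2,u_3$) are only promised a ``short ad hoc argument''. So what you have is a plan whose hardest step is missing, not a proof.

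For comparison, the paper resolves exactly this tie by a counting dichotomy rather than a disjointness assumption, and it needs no case split on vertex coincidences (nor Claim~\ref{clm:333-4}). Writing the path as $xyzw$ with off-path neighbours $y_1$ of $y$ and $z_1$ of $z$, it colours $y$ first: if $|C_H(x)\cup\{\vp(y_1)\}|=7$, it chooses $\vp(y)$ \emph{inside} this set, which is always possible because the competing constraints on $y$, namely $C_H(y)\cup\vp\bigl((N_G(x)\setminus S)\cup\{z_1\}\bigr)$, number at most six; if instead $|C_H(x)\cup\{\vp(y_1)\}|\le 6$, any legal colour for $y$ will do. Either way $|C_H(x)\cup\{\vp(y),\vp(y_1)\}|\le 7$. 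It then colours $w$, then $z$, and saves the endpoint $x$ for last, whose forbidden set $C_H(x)\cup\{\vp(y),\vp(y_1),\vp(z)\}$ now has size at most eight. The essential difference from your plan is that the colour forced into the large forbidden set is drawn from a set whose size ($7$) strictly exceeds the number of constraints competing with that choice ($6$), so its existence is guaranteed by counting; your analogous choice at $v_1$ has no such guarantee. To salvage your ordering you would need to replace the disjointness claim by a dichotomy of this kind, or reorder so that an endpoint, not the interior vertex $v_3$, is coloured last.
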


\begin{proof}
Suppose $G$ has a path $xyzw$ on four $3$-vertices, and let $y_1$ (resp. $z_1$) be the neighbor of $y$ (resp. $z$) that is not on the path.
See Figure~\ref{fig:3vx}(b).
For $S = \{x,y,z,w\}$, let $H$ be the $S$-reduced graph.
By the minimality of $G$, $H$ has a $2$-PCF $9$-coloring $\vp$.
Note that $|C_H(x) \cup \{\vp(y_1)\}| \le 7$.
If $|C_H(x) \cup \{\vp(y_1)\}| = 7$, then color $y$ with a color in $\left(C_H(x) \cup \{\vp(y_1)\}\right) \setminus \left(C_H(y) \cup \vp((N_G(x) \setminus S)\cup \{z_1\})\right)$, 
and if $|C_H(x) \cup \{\vp(y_1)\}| \le 6$, then color $y$ with a color not in $C_H(y) \cup \vp((N_G(x)  \setminus S)\cup\{z_1\})$.
This guarantees two (actually three) unique colors for $x$, and in both cases, $|C_H(x) \cup \{\vp(y), \vp(y_1)\}|\le 7$.
Color $w$ with a color not in $C_H(w) \cup \{\vp(y), \vp(z_1)\}$ to guarantee two (actually three) unique colors for $z$, and color $z$ with a color not in $C_H(z) \cup \vp(N_G(w) \setminus S) \cup \{\vp(y), \vp(y_1), \vp(w)\}$ to guarantee two (actually three) unique colors for $w$. 
Finally, color $x$ with a color not in $C_H(x) \cup \{\vp(y), \vp(y_1), \vp(z)\}$ to guarantee two (actually three) unique colors for $y$, and now $\vp$ is a $2$-PCF $9$-coloring of $G$, which is a contradiction.
\end{proof}

\begin{claim}\label{clm:3344}
$G$ does not have a 4-cycle $xyzw$ where $x$ and $y$ are $3$-vertices. 
\end{claim}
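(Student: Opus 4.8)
Suppose for contradiction that $G$ has a $4$-cycle $xyzw$ with $x$ and $y$ of degree $3$. The plan is to argue by minimality exactly as in the previous claims, taking $S=\{x,y,z,w\}$ to be the entire $4$-cycle. First note that if both $z$ and $w$ were $3$-vertices, then (together with the $3$-vertices $x,y$) the four vertices $w,x,y,z$ would form a path on four $3$-vertices, contradicting Claim~\ref{clm:3333}; so at least one of $z,w$ is a $4$-vertex, and relabeling via the symmetry of the $4$-cycle that swaps $x\leftrightarrow y$ and $z\leftrightarrow w$ we may assume $z$ is a $4$-vertex. Let $x_1$ and $y_1$ be the neighbors of $x$ and $y$ off the cycle, let $z_1,z_2$ be the neighbors of $z$ off the cycle, and let $w_1$ (and $w_2$ if $w$ is a $4$-vertex) be the neighbor(s) of $w$ off the cycle; by Claim~\ref{clm:triangle} these auxiliary vertices are distinct and lie outside $S$, since a coincidence among them would create a triangle or a smaller configuration already handled. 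Each vertex of $S$ has at most two neighbors outside $S$, so $H:=G*S$ is planar, has maximum degree at most $4$, and has fewer vertices than $G$; by minimality $H$ has a $2$-PCF $9$-coloring $\vp$. Observe that $z_1z_2$ and, if it exists, $w_1w_2$ are edges of $H$, so those pairs are colored differently by $\vp$, and that $|C_H(v)|\le 3|N_G(v)\setminus S|\le 6$ for every $v\in S$.

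I would then extend $\vp$ to $x,y,z,w$ in the order $z,w,y,x$: color $z$ with a color outside $C_H(z)\cup\{\vp(y_1)\}$ (at most $7$ forbidden colors), color $w$ with a color outside $C_H(w)\cup\{\vp(z),\vp(x_1)\}$ (at most $8$), color $y$ with a color outside $C_H(y)\cup\{\vp(z),\vp(w),\vp(x_1)\}$, and finally color $x$ with a color outside $C_H(x)\cup\{\vp(y),\vp(z),\vp(w),\vp(y_1)\}$. Avoiding the $C_H$ sets keeps $x_1,y_1,z_1,z_2$ (and $w_1,w_2$) with two unique colors and keeps all edges leaving $S$ properly colored; the additional forbidden colors make the remaining cycle edges properly colored and force the neighborhoods $\{\vp(y),\vp(w),\vp(x_1)\}$ of the $3$-vertex $x$ and $\{\vp(x),\vp(z),\vp(y_1)\}$ of the $3$-vertex $y$ to be rainbow, which is precisely the $\min\{2,d(v)\}=2$ requirement at a $3$-vertex.

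The step that requires the most care — and that I expect to be the main obstacle — is guaranteeing that the $4$-vertices $z$ and $w$ also end up with two unique colors in their neighborhoods, since this does not follow from merely avoiding $C_H(z)$ and $C_H(w)$. As $\vp(z_1)\neq\vp(z_2)$ already, the neighborhood of $z$ fails to have two unique colors only if $\{\vp(y),\vp(w)\}=\{\vp(z_1),\vp(z_2)\}$; so in the sub-case where the color chosen for $w$ lies in $\{\vp(z_1),\vp(z_2)\}$ I would additionally forbid $\vp(z_1)$ and $\vp(z_2)$ when coloring $y$, which (since $\vp(w)$ is already forbidden there) adds at most one new color, keeping that forbidden set of size at most $8$. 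Symmetrically, when coloring $x$ I would additionally forbid $\vp(w_1),\vp(w_2)$ in the sub-case $\vp(z)\in\{\vp(w_1),\vp(w_2)\}$, again adding at most one new color and staying below $9$. A routine verification then shows each of the four forbidden sets has size at most $8$, so a valid color is always available, contradicting the minimality of $G$. When $w$ is a $3$-vertex the requirement at $w$ strengthens to a rainbow condition, but then $|C_H(w)|\le 3$, so the same bookkeeping — with $\vp(w_1)$ inserted into a couple of the forbidden sets to force that rainbow — goes through with room to spare; moreover in that sub-case Claims~\ref{clm:triangle3} and~\ref{clm:333-4} force a great deal of extra structure around the cycle, so the reduction can only be easier.
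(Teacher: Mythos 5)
Your proposal is essentially the paper's proof: the same reduction with $S=\{x,y,z,w\}$, the same coloring order $z,w,y,x$, and the same forbidden-set bookkeeping, differing only in that you forbid $\vp(z_1),\vp(z_2)$ at $y$ and $\vp(w_1),\vp(w_2)$ at $x$ conditionally (and $\vp(w_1)$ at $z$ only when $w$ is a $3$-vertex), whereas the paper forbids them unconditionally; the counts work out either way. One side remark is inaccurate but harmless: the off-cycle neighbors need not all be distinct (e.g.\ $x_1=z_1$ or $y_1=w_1$ creates no triangle and no configuration excluded by the earlier claims), though your color bookkeeping never actually uses that distinctness, only that $x_1,y_1,z_1,z_2,w_1,w_2\notin S$ and that $\vp(z_1)\neq\vp(z_2)$ and $\vp(w_1)\neq\vp(w_2)$, which do hold.
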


\begin{proof}
Suppose $G$ has a $4$-cycle $F: xyzw$ where $x$ and $y$ are $3$-vertices. 
Let $x_1,y_1,z_1,w_1$ be a neighbor of $x,y,z,w$, respectively, that is not on $F$. 
By Claim~\ref{clm:3333}, we may assume $z$ is a $4$-vertex, so let $z_2$ be the neighbor of $z$ that is neither on $F$ nor $z_1$, and if $w$ is a $4$-vertex, then let $w_2$ be the neighbor of $w$ that is neither on $F$ nor $w_1$. 
See Figure~\ref{fig:3vx}(c). 
For $S = \{x,y,z,w\}$, let $H$ be the $S$-reduced graph.
By the minimality of $G$, $H$ has a $2$-PCF $9$-coloring $\vp$.
Color $z$ with a color not in $C_H(z) \cup \{\vp(y_1), \vp(w_1)\}$, and color $w$ with a color not in $C_H(w) \cup \{\vp(z), \vp(x_1)\}$.
Color $y$ with a color not in $C_H(y) \cup \{\vp(x_1), \vp(w), \vp(z), \vp(z_1), \vp(z_2)\}$ to guarantee two unique colors for each of $x$ and $z$.
Finally, color $x$ with a color not in $C_H(x) \cup \{\vp(y), \vp(y_1), \vp(z), \vp(w), \vp(w_1)\}$ to guarantee two unique colors for each of $y$ and $w$. 
Now, $\vp$ is a $2$-PCF $9$-coloring of $G$, which is a contradiction.
\end{proof}

\begin{claim}\label{clm:4faces}
$G$ does not have a $3$-vertex incident with two $4$-faces.
\end{claim}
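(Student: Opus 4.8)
The plan is to suppose, for contradiction, that $v$ is a $3$-vertex incident with two $4$-faces $F_1$ and $F_2$. Since $v$ has degree $3$ and $G$ is triangle-free by Claim~\ref{clm:triangle}, the faces $F_1$ and $F_2$ share an edge $vz$ at $v$, so we may write $F_1=vxyz$ and $F_2=vzwu$ for $4$-cycles with $N_G(v)=\{x,z,u\}$. Applying Claim~\ref{clm:3344} to $F_1$, on which $v$ is a $3$-vertex adjacent to both $x$ and $z$, forces $x$ and $z$ to be $4$-vertices; applying it to $F_2$ forces $u$ to be a $4$-vertex. In particular $z$ is a $4$-vertex with $N_G(z)=\{v,y,w,z_1\}$ for a vertex $z_1$, and $x$, $u$, $y$ have further neighbors $x_1,x_2$, $u_1,u_2$, and $y_1$ (together with $y_2$ if $y$ is a $4$-vertex).

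I would then set $S=\{v,x,y,z\}$. Each vertex of $S$ has at most two neighbors outside $S$ ($v$ only $u$; $x$ only $x_1,x_2$; $z$ only $w,z_1$; $y$ only $y_1$, and $y_2$ if it exists), so $H=G*S$ is well-defined, planar, and of maximum degree at most $4$; by the minimality of $G$ it has a $2$-PCF $9$-coloring $\vp$. Since $G$ is triangle-free, the pairs $x_1,x_2$; $y_1,y_2$; $w,z_1$ are each non-adjacent in $G$ but share a common neighbor in $S$, so $H$ contains the edges $x_1x_2$, $y_1y_2$, $wz_1$, whence $\vp(x_1)\neq\vp(x_2)$, $\vp(y_1)\neq\vp(y_2)$, and $\vp(w)\neq\vp(z_1)$. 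It then remains to extend $\vp$ to $v,x,y,z$.

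To extend, I would color $v,x,y,z$ one at a time. For $s\in S$, coloring $s$ with a color outside $C_H(s)$ keeps $\vp$ proper and secures two unique colors for the neighbors of $s$ outside $S$; on top of that one forbids the colors needed for $\vp$ to be proper on the $4$-cycle $v$-$x$-$y$-$z$ and for the $2$-PCF condition to hold at $v$, $x$, $y$, and $z$ themselves. The condition at $v$ is the binding one: since $N_G(v)=\{x,z,u\}$ has only three vertices, it must be rainbow, so $\vp(x),\vp(z),\vp(u)$ must be pairwise distinct. Using the inequalities obtained from $H$, the condition at $x$ reduces to $\vp(v)\notin\{\vp(x_1),\vp(x_2)\}$ and that at $z$ to $\vp(v)\notin\{\vp(w),\vp(z_1)\}$, so both can be charged to the cheap choice of $\vp(v)$: coloring $v$ first forbids only the at most seven colors in $C_H(v)\cup\{\vp(x_1),\vp(x_2),\vp(w),\vp(z_1)\}$. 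The condition at $y$ reduces to having $\vp(x)$ or $\vp(z)$ avoid $\{\vp(y_1),\vp(y_2)\}$ (when $y$ is a $4$-vertex; if $y$ is a $3$-vertex it suffices that $\vp(x)$ and $\vp(z)$ both avoid $\vp(y_1)$), which can be charged to whichever of $x$ and $z$ is colored second.

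The hard part is the color count at the two $4$-vertices $x$ and $z$: each has two neighbors outside $S$, so $|C_H(x)|,|C_H(z)|\le 6$, and together with the properness constraint against the previously colored vertex of $S$, the requirement that $\vp(x)$ and $\vp(z)$ avoid $\vp(u)$ and each other, and the constraint securing the condition at $y$, the set of forbidden colors may reach size $9$. In that extremal case the colors on $u,x_1,x_2,w,z_1,y_1,y_2$ and on the already colored vertices of $S$ are completely determined, and a valid extension is recovered by recoloring two of $v,x,y,z$ through a short swap, just as in the proof of Claim~\ref{clm:triangle}; here the argument may split according to whether $y$ is a $3$- or a $4$-vertex. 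Once $v,x,y,z$ have all been colored, $\vp$ is a $2$-PCF $9$-coloring of $G$, contradicting the choice of $G$, and this contradiction proves the claim.
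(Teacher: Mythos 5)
Your setup is sound: the two $4$-faces at $v$ share the edge $vz$, Claim~\ref{clm:3344} does force $x,z,u$ to be $4$-vertices, $S=\{v,x,y,z\}$ satisfies the definition of an $S$-reduced graph, and you correctly identify the crux, namely that $N_G(v)=\{x,z,u\}$ must be rainbow. The gap is in the extension step, which you defer to an unspecified ``short swap.'' With your ordering, whichever of $x,z$ is colored last must simultaneously avoid its own $C_H$-set (up to $6$ colors), the colors of $v$ and of the other member of $\{x,z\}$, the color of $u$ (for the rainbow condition at $v$), and---if the condition at $y$ is charged to it---the colors $\vp(y_1),\vp(y_2)$; that is up to $6+3+2=11$ forbidden colors, not $9$, and charging the $y$-condition to the first of $x,z$ instead only moves the overflow there. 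So the failure of the greedy step is not a single tight extremal case in which ``the colors \dots are completely determined'': the deficit can be two or three colors, and the structure one extracts from tightness in Claim~\ref{clm:triangle} (where the deficit is exactly one and the three uncolored vertices form a triangle with no precolored common neighbor whose color is frozen) is not available here. Any repair must keep $\vp(u)$ fixed and preserve the $2$-PCF condition at all of $u,x_1,x_2,y_1,y_2,w,z_1$ (some of which may coincide); you neither exhibit such a recoloring nor show it exists, and this is exactly the hard content of the claim, so as written the proof is incomplete.

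For comparison, the paper avoids this count entirely: it does not use an $S$-reduced graph here, but deletes only $v$ and adds the edge $xu$ (planar since $x$ and $u$ lie on the third face at $v$, and no degree increases). The coloring of that graph already forces $\vp(x)\neq\vp(u)$ and gives three unique colors to $z$, so only $v$ needs to be colored, against at most $7$ forbidden colors; the single remaining obstruction $\vp(z)\in\{\vp(x),\vp(u)\}$ is repaired by recoloring $z$ alone, using that $y$ is then forced to be a $4$-vertex whose other two neighbors are rainbow and that $w$ and $z_1$ retain two unique colors. If you wish to keep your decomposition, you must carry out the extremal analysis for the $10$--$11$ forbidden-color situations explicitly; otherwise the modified-graph reduction is the cleaner route.
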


\begin{proof}
Suppose $G$ has a $3$-vertex $v$ incident with two $4$-cycles $xyzv$ and $uwzv$.
By Claim~\ref{clm:3344}, $x,z,u$ are $4$-vertices. 
Let $z_1$ be the neighbor of $z$ that is not $y, v, w$. 
See Figure~\ref{fig:3vx}(d).
Let $H$ be the graph obtained from $G$ by removing $v$ and adding the edge $xu$, if it did not exist already.  
Note that $H$ is still planar and the maximum degree did not increase. 
By the minimality of $G$, $H$ has a $2$-PCF $9$-coloring $\vp$, so each of $\vp(N_G(x))$ and $\vp(N_G(u))$ must consist of at least two distinct colors. 
Let $\alpha$, $\beta$ be two distinct colors in $\vp(N_G(x))$, and let $\gamma, \delta$ be two distinct colors in $\vp(N_G(u))$.
Note that there are three unique colors for $z$, so regardless of the color assigned to $v$, at least two unique colors are guaranteed for $z$.

If $\vp(N_G(v))$ consists of three distinct colors, then  color $v$ with a color not in $\{\vp(x), \alpha, \beta, \vp(z), \vp(u), \gamma, \delta\}$ to guarantee two unique colors for each of $x$ and $u$. 
Now $\vp$ is a $2$-PCF $9$-coloring of $G$, which is a contradiction.
Thus, $\vp(N_G(v))$ consists of two distinct colors.

Without loss of generality, assume $\vp(x)=\vp(z)$.
There must be two unique colors for $y$, so $y$ must be a $4$-vertex and the two neighbors of $y$ other than $x$ and $z$ received different colors that is also different from $\vp(x)$. 
Thus, regardless of the color (re)assigned to $z$, two unique colors are guaranteed for $y$.
Note that since $\vp(N_G(w))$ contains at least three colors, there is a color $a \in \vp(N_G(w)) \setminus \{\vp(u),\vp(z)\}$.
Let $b$ and $c$ be two distinct colors in $\vp(N_G(z_1) \setminus \{z\})$.
Recolor $z$ with a color not in $\{\vp(x), \vp(u), \vp(y), \vp(z_1), b,c, \vp(w), a\}$ to guarantee two  unique colors for each of $z_1$ and $w$, and color $v$ with a color not in $\{\vp(x), \alpha, \beta, \vp(z), \vp(u), \gamma, \delta\}$ to guarantee two unique colors for each of $x$ and $u$.
Now, $\vp$ is a $2$-PCF $9$-coloring of $G$, which is a contradiction.
\end{proof}

\begin{figure}
    \centering
\begin{tikzpicture} [scale=0.7]
\filldraw[fill=black, draw=black] (-54:1) circle (0.05) node [below left] {$x$};
\filldraw[fill=black, draw=black] (18:1) circle (0.05) node [below right] {$y$};
\filldraw[fill=black, draw=black] (90:1) circle (0.05) node [above right] {$z$};
\filldraw[fill=black, draw=black] (162:1) circle (0.05) node [below left] {$u$};
\filldraw[fill=black, draw=black] (234:1) circle (0.05) node [below right] {$v$};
\filldraw[fill=black, draw=black] (18:2) circle (0.05) node [above left] {$y_1$};
\filldraw[fill=black, draw=black] (162:2) circle (0.05) node [above right] {$u_1$};
\draw (-54:1) -- (18:1) -- (90:1) -- (162:1) -- (234:1) -- cycle;
\draw \foreach \x in {18, 162} {(\x:1) -- (\x:2.6)};
\draw (-54:1) -- (-44:1.5);
\draw (-54:1) -- (-64:1.5);
\draw (18:2) -- (28:2.5);
\draw (18:2) -- (8:2.5);
\draw (162:2) -- (172:2.5);
\draw (162:2) -- (152:2.5);
\draw (234:1) -- (224:1.5);
\draw (234:1) -- (244:1.5);
\draw (90:1) -- (90:1.6);
\draw (0,-2) node {(a)};
\end{tikzpicture}    
\qquad
\begin{tikzpicture}[scale=0.7] 
\filldraw[fill=black, draw=black] (-54:1) circle (0.05) node [below left] {$x$};
\filldraw[fill=black, draw=black] (18:1) circle (0.05) node [below right] {$y$};
\filldraw[fill=black, draw=black] (90:1) circle (0.05) node [above right] {$z$};
\filldraw[fill=black, draw=black] (162:1) circle (0.05) node [below left] {$u$};
\filldraw[fill=black, draw=black] (234:1) circle (0.05) node [below right] {$v$};
\draw (-54:1) -- (18:1) -- (90:1) -- (162:1) -- (234:1) -- cycle;
\draw \foreach \x in {-54,90,234} {(\x:1) -- (\x:2)};
\draw (18:1) -- (28:1.7);
\draw (18:1) -- (8:1.7);
\draw (162:1) -- (152:1.7);
\draw (162:1) -- (172:1.7);
\draw (234:2) -- (224:2.3);
\draw (234:2) -- (244:2.3);
\draw (-54:2) -- (-44:2.3);
\draw (-54:2) -- (-64:2.3);
\draw (234:2) -- (234:2.4);
\draw (-54:2) -- (-54:2.4);
\filldraw[fill=black, draw=black] (234:2) circle (0.05) node [above left] {$v_1$};
\filldraw[fill=black, draw=black] (-54:2) circle (0.05) node [above right] {$x_1$};
\filldraw[fill=white, draw=black] (90:2) circle (0.05) node [below right] {$z_1$};
\draw (0,-2) node {(b)};
\end{tikzpicture}  
\qquad
\begin{tikzpicture}[scale=0.7] 
\filldraw[fill=black, draw=black] (-54:1) circle (0.05) node [below left] {$x$};
\filldraw[fill=black, draw=black] (18:1) circle (0.05) node [below right] {$y$};
\filldraw[fill=black, draw=black] (90:1) circle (0.05) node [above right] {$z$};
\filldraw[fill=black, draw=black] (162:1) circle (0.05) node [below left] {$u$};
\filldraw[fill=black, draw=black] (234:1) circle (0.05) node [below right] {$v$};
\draw (-54:1) -- (18:1) -- (90:1) -- (162:1) -- (234:1) -- cycle;
\draw \foreach \x in {-54,234} {(\x:1) -- (\x:2.4)};
\draw (18:1) -- (28:1.7);
\draw (18:1) -- (8:1.7);
\draw (162:1) -- (147:2);
\draw (162:1) -- (172:1.7);
\draw (90:1) -- (110:2);
\draw (110:2) -- (147:2);
\draw (110:2) -- (115:2.5);
\draw (110:2) -- (100:2.3);
\draw (234:2) -- (224:2.3);
\draw (234:2) -- (244:2.3);
\draw (-54:2) -- (-44:2.3);
\draw (-54:2) -- (-64:2.3);
\filldraw[fill=black, draw=black] (234:2) circle (0.05) node [above left] {$v_1$};
\filldraw[fill=black, draw=black] (-54:2) circle (0.05) node [above right] {$x_1$};
\filldraw[fill=black, draw=black] (110:2) circle (0.05) node [right] {$z_1$};
\filldraw[fill=white, draw=black] (147:2) circle (0.05) node [left] {$u_1$};
\draw (0,-2) node {(c)};
\end{tikzpicture}    
\qquad
\begin{tikzpicture} [scale=0.7]
\filldraw[fill=black, draw=black] (-54:1) circle (0.05) node [below left] {$x$};
\filldraw[fill=black, draw=black] (18:1) circle (0.05) node [below right] {$y$};
\filldraw[fill=black, draw=black] (90:1) circle (0.05) node [above right] {$z$};
\filldraw[fill=black, draw=black] (162:1) circle (0.05) node [above] {$u$};
\filldraw[fill=black, draw=black] (234:1) circle (0.05) node [below right] {$v$};
\draw (-54:1) -- (18:1) -- (90:1) -- (162:1) -- (234:1) -- cycle;
\draw \foreach \x in {-54,90} {(\x:1) -- (\x:2.4)};
\draw (234:1) -- (224:2);
\draw (18:1) -- (28:1.7);
\draw (18:1) -- (8:1.7);
\draw (162:1) -- (152:1.7);
\draw (162:1) -- (190:2);
\draw (-54:2) -- (-44:2.3);
\draw (-54:2) -- (-64:2.3);
\draw (190:2) -- (224:2);
\draw (224:2) -- (234:2.3);
\draw (224:2) -- (220:2.5);
\draw (90:2) -- (80:2.3);
\draw (90:2) -- (100:2.3);
\filldraw[fill=white, draw=black] (190:2) circle (0.05) node [left] {$u_1$};
\filldraw[fill=black, draw=black] (224:2) circle (0.05) node [below right] {$v_1$};
\filldraw[fill=black, draw=black] (-54:2) circle (0.05) node [above right] {$x_1$};
\filldraw[fill=black, draw=black] (90:2) circle (0.05) node [below right] {$z_1$};
\draw (0,-2) node {(d)};
\end{tikzpicture}    
    \caption{Figures for Claims~\ref{clm:33344},~\ref{clm:34334-3}, and~\ref{clm:bad3vx}}
    \label{fig:pentagon}
\end{figure}
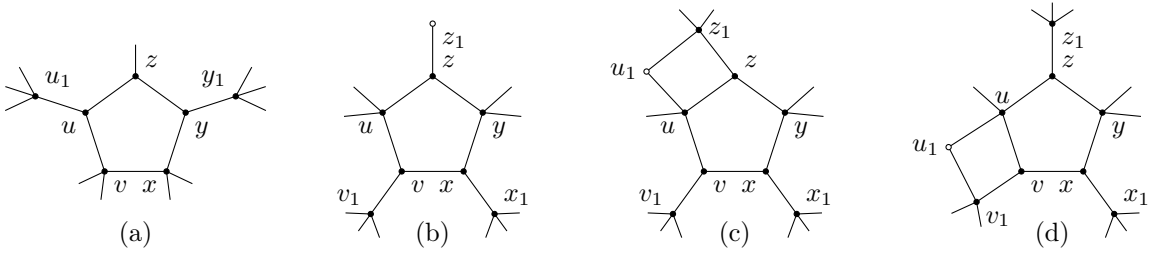

\begin{claim}\label{clm:33344}
$G$ does not have a $5$-cycle with three consecutive $3$-vertices.
\end{claim}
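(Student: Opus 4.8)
The plan is to reduce the whole $5$-cycle and extend a colouring of the reduced graph. Suppose $G$ contains a $5$-cycle $xyzuv$ in which $y,z,u$ are the three consecutive $3$-vertices, and let $x_1,x_2$; $y_1$; $z_1$; $u_1$; $v_1,v_2$ be the neighbours off the cycle of $x,y,z,u,v$ respectively (so $y,z,u$ each have exactly one such neighbour and $x,v$ each have exactly two). First I would pin down the local structure. By Claim~\ref{clm:triangle} the cycle is chordless and $x_1x_2,v_1v_2\notin E(G)$; by Claim~\ref{clm:333-4} applied to the path $yzu$, the vertex $z_1$ is a $3$-vertex; by Claim~\ref{clm:3333} applied to the paths $z_1zyy_1$ and $z_1zuu_1$, the vertices $y_1,u_1$ are $4$-vertices; and by Claim~\ref{clm:333-4} applied to the paths $xyz$ and $zuv$ (using that $y_1,u_1$ are $4$-vertices), the vertices $x,v$ are $4$-vertices. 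Take $S=\{x,y,z,u,v\}$; since each vertex of $S$ has at most two neighbours off the cycle, the $S$-reduced graph $H=G\ast S$ is defined, so by minimality it has a $2$-PCF $9$-colouring $\vp$. Note that $x_1x_2,v_1v_2\in E(H)$, so $\vp(x_1)\neq\vp(x_2)$ and $\vp(v_1)\neq\vp(v_2)$, and that $z_1$ has degree $2$ in $H$, so its two neighbours off the cycle receive distinct colours.

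Next I would extend $\vp$ to $G$ by colouring the vertices of $S$ in the order $x,v,y,u,z$. Two observations cut down the bookkeeping: (i) the vertex $x$ (resp.\ $v$) is automatically given two unique colours once $\vp(y)\notin\{\vp(x_1),\vp(x_2)\}$ (resp.\ once $\vp(u)\notin\{\vp(v_1),\vp(v_2)\}$), because then three of its neighbours already carry pairwise distinct colours; (ii) the vertex $z$ is automatically given three unique colours once $\vp(y),\vp(u),\vp(z_1)$ are pairwise distinct. Using $|C_H(x)|,|C_H(v)|\le 6$ and $|C_H(y)|,|C_H(z)|,|C_H(u)|\le 3$, a routine count in which at each step we forbid the relevant set $C_H(\cdot)$ together with the few colours needed for properness and for (i) and (ii) shows that at most $7,8,7,8$ colours are forbidden when we colour $x,v,y,u$ respectively, so those four steps succeed; finally $z$ must avoid $C_H(z)\cup\{\vp(x),\vp(y),\vp(y_1),\vp(v),\vp(u),\vp(u_1)\}$, a set of size at most $9$.

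The main obstacle is exactly the tight case in which $z$ sees all nine colours. There $|C_H(z)|=3$, so $C_H(z)=\{\vp(z_1),\vp(a),\vp(b)\}$ where $a,b$ are the neighbours of $z_1$ off the cycle (both $4$-vertices, by Claim~\ref{clm:3333} applied to $az_1zy$), and $\vp(x),\vp(y),\vp(y_1),\vp(v),\vp(u),\vp(u_1)$ are six further distinct colours. In this situation I would recolour so as to force a coincidence inside $z$'s forbidden set: re-choose $\vp(y)$ — which was pinned by at most seven colours and so has slack — to equal $\vp(v)$; this keeps the neighbour-colours of $x$, $y$, and $z$ as they must be, and it shrinks $z$'s forbidden set to size $8$, provided the new colour avoids $C_H(y)\cup\{\vp(x_1),\vp(x_2)\}$. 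By the reflection symmetry of the configuration through $z$, if that fails one may instead re-choose $\vp(u)$ to equal $\vp(x)$. The residual rigid case, where neither works because $\{\vp(a),\vp(b)\}$ and $\{\vp(x_1),\vp(x_2)\}$ (or their $v$-analogues) are forced to interlock, is handled by recolouring $z_1$ (or $x_1$) instead, and this is where I expect the structural claims — no triangles, no path on four $3$-vertices, and Claims~\ref{clm:3344}, \ref{clm:4faces} — to be needed, in the same spirit as the tight-case arguments in the proofs of Claims~\ref{clm:triangle} and~\ref{clm:4faces}.
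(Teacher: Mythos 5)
Your setup and reduction are the same as the paper's (delete $S=\{x,y,z,u,v\}$, take a $2$-PCF $9$-coloring of $G*S$, extend greedily, using Claims~\ref{clm:333-4} and~\ref{clm:3333} to force $x,v,y_1,u_1$ to be $4$-vertices), but your proof is not complete where it matters most. By saving $z$ for last you create a forbidden set $C_H(z)\cup\{\vp(x),\vp(y),\vp(y_1),\vp(v),\vp(u),\vp(u_1)\}$ of size up to $9$, and your treatment of that tight case is only a sketch: the recoloring $\vp(y):=\vp(v)$ (or $\vp(u):=\vp(x)$) is itself only valid under an unverified side condition ($\vp(v)\notin C_H(y)\cup\{\vp(x_1),\vp(x_2)\}$), and the residual case is deferred to ``recolouring $z_1$ (or $x_1$) instead,'' which is not an argument at all: $z_1$ and $x_1$ are already-colored vertices of $H$, and changing their colors can destroy properness or the $2$-PCF condition at their own neighbors (and at those neighbors' neighborhoods) inside $H$ --- none of which your $C_H(\cdot)$ bookkeeping controls. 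Saying you ``expect the structural claims to be needed'' there concedes that the case analysis has not been carried out, so the claim is not proved as written.

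The gap is avoidable by a better ordering rather than by repair-by-recoloring, and this is exactly what the paper does: color $v$ avoiding $C_H(v)\cup\{\vp(u_1)\}$, then $x$ avoiding $C_H(x)\cup\{\vp(v),\vp(y_1)\}$, then $z$ (while $y,u$ are still uncolored) avoiding $C_H(z)\cup\{\vp(u_1),\vp(v),\vp(y_1),\vp(x)\}$ --- at most $7$ colors. This choice makes the neighborhoods of $y$ and of $u$ automatically rainbow, so the last two steps only need to avoid $C_H(y)\cup\vp((N_G(x)\cup N_G(z))\setminus S)\cup\{\vp(z),\vp(x)\}$ (at most $8$) for $y$ and $\vp((N_G(v)\cup N_G(z))\setminus S)\cup\{\vp(y),\vp(z),\vp(v),\vp(u_1)\}$ (at most $7$) for $u$, using that $u_1$'s three $H$-neighbors already carry distinct colors. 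Every forbidden set then has size at most $8$, so no tight case ever arises; if you want to keep your ordering, you must genuinely finish the nine-color case, including a verified recoloring argument for vertices of $H$.
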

\begin{proof}
Suppose $G$ has a $5$-cycle $F:xyzuv$ with three consecutive $3$-vertices $y$, $z$, and $u$.
Let $y_1$ and $u_1$ be the neighbor of $y$ and $u$, respectively, that is not on $F$.
By Claim~\ref{clm:3333}, $x,v, y_1, u_1$ are $4$-vertices. 
See Figure~\ref{fig:pentagon}(a).
For $S = \{x,y,z,u,v\}$, let $H$ be the $S$-reduced graph.
By the minimality of $G$, $H$ has a $2$-PCF $9$-coloring $\vp$.
Color $v$ with a color not in $C_H(v) \cup \{\vp(u_1)\}$, and color $x$ with a color not in $C_H(x) \cup \{\vp(v), \vp(y_1)\}$.
Color $z$ with a color not in $C_H(z) \cup \{\vp(u_1), \vp(v), \vp(y_1), \vp(x)\}$ to guarantee two (actually three) unique colors for each of $y$ and $u$.
Color $y$ with a color not in $C_H(y) \cup \vp((N_G(x) \cup N_G(z)) \setminus S) \cup \{\vp(z), \vp(x)\}$ to guarantee two unique colors for $x$.
Note that $u_1$ already has three unique colors, so regardless of the color assigned to $u$, at least two unique colors are guaranteed for $u_1$.
Color $u$ with a color not in $\vp((N_G(v) \cup N_G(z)) \setminus S) \cup \{\vp(y), \vp(z),\vp(v), \vp(u_1)\}$ to guarantee two unique colors for each of $z$ and $v$.
Now, $\vp$ is a $2$-PCF $9$-coloring of $G$, which is a contradiction.
\end{proof}

\begin{claim}\label{clm:34334-3}
If $G$ has a $5$-cycle $F$ incident with three $3$-vertices, then every $3$-vertex on $F$ has a $4$-neighbor that is not on $F$.  
\end{claim}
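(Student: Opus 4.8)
The plan is to first read off the structure of $F$ from Claim~\ref{clm:33344}, then dispose of two of the three $3$-vertices of $F$ in one line using Claim~\ref{clm:333-4}, and finally rule out a degree-$3$ off-cycle neighbour of the remaining $3$-vertex by a reducibility argument against the minimum counterexample $G$.

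First I would pin down $F$. A $5$-cycle cannot have four $3$-vertices, since any four of its vertices contain three consecutive ones; so $F$ has exactly three $3$-vertices, and by Claim~\ref{clm:33344} they are not all consecutive, whence the two $4$-vertices of $F$ are non-adjacent. As two non-adjacent vertices of a $5$-cycle are at distance two, they cut $F$ into an arc with a single $3$-vertex and an arc with two $3$-vertices. Writing $F=xyzuv$, we may therefore assume that $y$ and $u$ are the $4$-vertices, that $x\sim v$ are the two ``side'' $3$-vertices, and that $z$ is the ``middle'' $3$-vertex, both of whose neighbours on $F$, namely $y$ and $u$, are $4$-vertices (see Figure~\ref{fig:pentagon}(b)). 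Let $x_1,z_1,v_1$ denote the neighbours of $x,z,v$ not on $F$; these lie outside $V(F)$, and the small coincidences among them that would matter below (such as $x_1\sim v$ or $x_1=v_1$) are excluded by Claim~\ref{clm:triangle}.

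For the two side vertices the claim is immediate from Claim~\ref{clm:333-4}: the neighbours of $x$ are $v$ (a $3$-vertex), $y$ (a $4$-vertex), and $x_1$, so if $x_1$ were a $3$-vertex then $vxx_1$ would be a path on three $3$-vertices whose middle vertex $x$ is adjacent to the $4$-vertex $y$, a contradiction; hence $x_1$ is a $4$-vertex, and symmetrically (with $u$ in place of $y$) $v_1$ is a $4$-vertex. It remains to show that $z_1$ is a $4$-vertex. Suppose not; then $z_1$ is a $3$-vertex. I would take $S=\{x,y,z,u,v\}$ (or $S=\{x,y,z,u,v,z_1\}$): since $G$ has no triangle and $x_1,v_1$ are $4$-vertices, every vertex of $S$ has at most two neighbours outside $S$ — the $3$-vertices $x,z,v$ have the single outside neighbour $x_1,z_1,v_1$, and the $4$-vertices $y,u$ have two each — so $H=G*S$ is planar of maximum degree at most $4$ and, by minimality of $G$, admits a $2$-PCF $9$-coloring $\vp$. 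I would then extend $\vp$ to $G$ by colouring the vertices of $S$ one at a time, using that $|C_H(x)|,|C_H(z)|,|C_H(v)|\le 3$ while $|C_H(y)|,|C_H(u)|\le 6$, that $x,v,z$ are $3$-vertices (each needing only two unique colours in its neighbourhood), and that a $4$-vertex of $F$ automatically keeps its two unique colours coming from its two distinctly coloured outside neighbours once the vertices of $S$ adjacent to it avoid those two colours. Colouring $y$ and $u$ first, then $x$ and $v$, and finally $z$ (avoiding $\{\vp(y),\vp(u),\vp(z_1)\}$ together with the two colours that protect $y$ and the two that protect $u$), one arranges along the way that $\vp(y),\vp(u),\vp(z_1)$ are pairwise distinct, which gives $z$ its two unique colours; the outcome is a $2$-PCF $9$-coloring of $G$, contradicting the choice of $G$.

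The step I expect to be the real work is this last extension: one must fix the colouring order and verify, through the tight cases, that the running list of colours to avoid never reaches nine — a priori it can be as large as $|C_H(z)|+|\{\vp(y),\vp(u)\}|+4=9$ when $z$ is coloured — so one has to exploit overlaps between these lists, for instance by using the considerable freedom available when $y$ and $u$ are coloured to make $\vp(y)$ or $\vp(u)$ coincide with a colour that would otherwise be forbidden for $z$, exactly the kind of manoeuvre used in the proofs of Claims~\ref{clm:triangle3} and~\ref{clm:triangle} (where colours already placed on the removed vertices are permuted to manufacture the needed unique colours). The slack that makes this possible is that $z$, having both of its $F$-neighbours inside $S$, contributes nothing to $C_H(y)$ or $C_H(u)$, and that $C_H(z)=B_S(z_1)$ has size only $3$ precisely because $z_1$ is a $3$-vertex; the assumption $d(z_1)=3$ also lets one invoke Claim~\ref{clm:333-4} (or Claim~\ref{clm:3333}) outright in the subcase where $z_1$ has a further $3$-neighbour alongside a $4$-neighbour, shrinking the case analysis.
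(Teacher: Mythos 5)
Your structural setup (pinning down $F$ as in Figure~\ref{fig:pentagon}(b)) and your treatment of the two ``side'' $3$-vertices are fine and match the paper: if $x_1$ were a $3$-vertex then $vxx_1$ would violate Claim~\ref{clm:333-4}, and symmetrically for $v_1$. The problem is the remaining case, $z_1$ a $3$-vertex, which is exactly where you stop short: you set up the reduction but explicitly defer the extension (``the real work''), conceding that with your ordering the forbidden list can reach $9$ and that one must ``exploit overlaps.'' That is not a proof, and with your primary choice $S=\{x,y,z,u,v\}$ the tightness is not only at $z$: since $\vp(z_1)$ is then fixed in advance, whichever of $y,u$ is coloured second must avoid $C_H(\cdot)$ (up to $6$ colours) together with the colour of the other $4$-vertex, $\vp(z_1)$, and the colour of the adjacent side vertex's outside neighbour ($\vp(v_1)$ or $\vp(x_1)$) --- up to $9$ colours, so the greedy step can genuinely fail, and your suggested remedy (forcing $\vp(y)$ or $\vp(u)$ into an already-forbidden colour) is not always available without a case analysis you have not done.

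The paper's proof removes this obstruction by a different choice: it puts $z_1$ into $S$, i.e.\ $S=\{x,y,z,u,v,z_1\}$, and colours in the order $y,u,z_1,z,x,v$. Because $\vp(z_1)$ is chosen \emph{after} $y$ and $u$, the three neighbours $y,u,z_1$ of $z$ are made pairwise distinct at a cost of one extra forbidden colour per step instead of two, and every list stays at size at most $8$: $y$ avoids $C_H(y)\cup\{\vp(x_1)\}$ ($\le 7$), $u$ avoids $C_H(u)\cup\{\vp(y),\vp(v_1)\}$ ($\le 8$), $z_1$ avoids $C_H(z_1)\cup\{\vp(y),\vp(u)\}$ ($\le 8$), and then $z$, $x$, $v$ each have at most $5$, $6$, $7$ forbidden colours. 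So the missing idea in your write-up is precisely this: enlarge $S$ so that the colour of $z$'s off-cycle $3$-neighbour is a free choice made after the two $4$-vertices of $F$, rather than a constraint imposed on both of them; without it (or an equivalent case analysis of the tight configurations), the colouring extension you describe is not established.
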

\begin{proof}
Let $F:xyzuv$ be a $5$-cycle of $G$ incident with three $3$-vertices.
By Claim~\ref{clm:33344}, we may assume $x, z, v$ are $3$-vertices and $y,u$ are $4$-vertices.
Let $x_1$, $z_1$, and $v_1$ be the neighbor of $x$, $z$, and $v$, respectively, that is not on $F$.
See Figure~\ref{fig:pentagon}(b).
By Claim~\ref{clm:333-4}, $x_1$ and $v_1$ are $4$-vertices.

Suppose $z_1$ is a $3$-vertex.
For $S = \{x,y,z,u,v,z_1\}$, let $H$ be the $S$-reduced graph.
By the minimality of $G$, $H$ has a $2$-PCF $9$-coloring $\vp$.
Color $y$ with a color not in $C_H(y) \cup \{\vp(x_1)\}$, and color $u$ with a color not in $C_H(u) \cup \{\vp(y), \vp(v_1)\}$.
Color $z_1$ with a color not in $C_H(z_1) \cup \{\vp(y), \vp(u)\}$ to guarantee two (actually three) unique colors for $z$.
Color $z$ with a color not in $\vp(N_G(z_1) \setminus S) \cup \{\vp(y), \vp(u), \vp(z_1)\}$ to guarantee two (actually three) unique colors for $z_1$.
Note that there are three unique colors for each of $x_1$ and $v_1$, so regardless of the color assigned to $x$ and $v$, at least two unique colors are guaranteed for $x_1$ and $v_1$.
Color $x$ with a color not in $\vp(N_G(y) \setminus S) \cup \{\vp(y), \vp(u), \vp(v_1), \vp(x_1)\}$ to guarantee two unique colors for each of $y$ and $v$.
Color $v$ with a color not in $\vp(N_G(u) \setminus S) \cup \{\vp(u), \vp(v_1), \vp(x), \vp(x_1), \vp(y)\}$ to guarantee two unique colors for each of $x$ and $u$.
Now, $\vp$ is a $2$-PCF $9$-coloring of $G$, which is a contradiction.
\end{proof}

A $3$-vertex on a $4$-cycle is {\it bad}, and a $3$-vertex on no $4$-cycles is {\it good}.

\begin{claim}\label{clm:bad3vx}
If $G$ has a $5$-cycle $F$ incident with three $3$-vertices, then every $3$-vertex on $F$ is a good $3$-vertex.
\end{claim}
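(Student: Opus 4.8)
The plan is to combine the structural restrictions already proved with a reduction to a handful of local configurations, each handled by extending a $2$-PCF $9$-coloring from a reduced graph. First I would fix notation. By Claim~\ref{clm:33344} the three $3$-vertices on $F$ are not all consecutive, so write $F=xyzuv$ with $x,z,v$ being $3$-vertices and $y,u$ being $4$-vertices, and let $x_1,z_1,v_1$ be the neighbours of $x,z,v$, respectively, not on $F$. By Claim~\ref{clm:333-4}, $x_1$ is a $4$-vertex — otherwise $v,x,x_1$ is a path on three $3$-vertices whose middle vertex $x$ is adjacent to the $4$-vertex $y$ — and symmetrically $v_1$ is a $4$-vertex; by Claim~\ref{clm:34334-3}, $z_1$ is a $4$-vertex. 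The configuration is invariant under the reflection $x\leftrightarrow v$, $y\leftrightarrow u$, $x_1\leftrightarrow v_1$ fixing $z,z_1$. Now suppose for contradiction that some $3$-vertex $t$ on $F$ is bad; by the reflection we may assume $t\in\{v,z\}$, and $t$ lies on a $4$-cycle $C$, two of whose vertices adjacent to $t$ are two of the three neighbours of $t$.

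Next I would enumerate the possible shapes of $C$, using that $G$ has no triangle (Claims~\ref{clm:triangle3},~\ref{clm:triangle}) and no $4$-cycle with two adjacent $3$-vertices (Claim~\ref{clm:3344}). If $t=v$: since $x\not\sim v_1$ (else the triangle $vxv_1$ contradicts Claim~\ref{clm:triangle3}) and $x\not\sim u$ (as $N(x)=\{v,y,x_1\}$), the vertex $x$ cannot lie on $C$, for otherwise $xv$ would be an edge of $C$ and $C$ would contain the adjacent $3$-vertices $x$ and $v$, contradicting Claim~\ref{clm:3344}; hence $C$ passes through $u$ and $v_1$, so $u$ and $v_1$ have a common neighbour $u_1\neq v$ and $C=vuu_1v_1$, as in Figure~\ref{fig:pentagon}(d) (one checks $u_1\notin V(F)$; e.g.\ $u_1=y$ would give the triangle $yzu$). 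If $t=z$, whose three neighbours $y,u,z_1$ are all $4$-vertices: either $C=zz_1u_1u$ (or, by the reflection, $C=zz_1u_1y$) for a common neighbour $u_1$ of $z_1$ and $u$, as in Figure~\ref{fig:pentagon}(c), or $C=zywu$ for a common neighbour $w$ of $y$ and $u$, where Claims~\ref{clm:triangle3} and~\ref{clm:triangle} force $w\notin V(F)\cup\{x_1,v_1,z_1\}$.

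For each of these configurations I would take $S$ to be $V(F)$ together with the vertices of $C$ not on $F$, augmented if necessary by one more off-$F$ $4$-neighbour, chosen so that every vertex of $S$ has at most two neighbours outside $S$ (a further coincidence among the relevant off-$F$ neighbours would create a triangle, and the remaining degenerate identifications either collapse to an already-excluded configuration or go through with a smaller $S$). Setting $H=G*S$, minimality gives a $2$-PCF $9$-coloring $\vp$ of $H$. As in the proofs of Claims~\ref{clm:33344} and~\ref{clm:34334-3}, the $4$-vertices hanging off $F$ (such as $x_1,v_1,z_1$) end up with three distinct colours on their $H$-neighbourhoods, hence keep at least two unique colours in $G$ however $S$ is coloured, so only the vertices of $S$ and their $G$-neighbours on $C$ need to be watched; moreover $|C_H(s)|\le 6$ for every $s\in S$, with $|C_H(s)|\le 3$ when $s$ is a $3$-vertex having a single neighbour outside $S$, and the edges that $G*S$ inserts among the outside-neighbours of the vertices of $C$ keep the colours to be avoided along $C$ spread apart. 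I would then colour the (at most seven) vertices of $S$ in a carefully chosen order, starting from the $4$-vertices $y,u$ (or $x$) and the vertex of $C$ opposite $t$, and verify that at each step at most $8$ colours are forbidden, producing a $2$-PCF $9$-coloring of $G$ and the desired contradiction.

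The main obstacle is this last step: arranging the colouring order and checking that, within a budget of $9$ colours, $t$ keeps two unique colours, the $4$-vertices $y,u$ (or $x$) on $F$ keep two unique colours, and the off-$F$ $4$-neighbours among $x_1,z_1,v_1,u_1,w$ keep two unique colours, all simultaneously. This is exactly where the $4$-cycle $C$ through $t$ is used: the extra adjacency it produces when forming $G*S$ is what makes these counts close, whereas without a $4$-cycle at $t$ the bound $|C_H(t)|\le 3$ together with the properness and uniqueness demands at the neighbours of $t$ would be too weak to complete the extension.
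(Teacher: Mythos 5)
Your setup is sound and matches the paper's: you invoke Claims~\ref{clm:33344}, \ref{clm:333-4}, and \ref{clm:34334-3} to fix $F=xyzuv$ with $x,z,v$ of degree $3$ and $x_1,z_1,v_1$ of degree $4$, and your enumeration of where a bad $4$-cycle $C$ through a $3$-vertex of $F$ can sit is correct (indeed you even record the possibility $C=zywu$ through a common neighbour of $y$ and $u$, a case the paper's ``without loss of generality'' passes over silently). But there is a genuine gap: the entire substance of this claim is the extension step you defer. For each configuration the paper chooses a specific set $S$ (e.g.\ $S=\{x,y,z,u,v,z_1,u_1\}$ for Figure~\ref{fig:pentagon}(c) and $S'=\{x,y,z,u,v,u_1,v_1\}$ for Figure~\ref{fig:pentagon}(d)), a specific order in which to colour the vertices of $S$, and explicit forbidden sets, each verified to have size at most $8$; it also handles the coincidences $z_1=x_1$ and $z_1=v_1$ by postponing the uniqueness guarantee for $z_1$ until $x$ or $v$ is coloured. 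Your proposal replaces all of this with ``colour the vertices of $S$ in a carefully chosen order and verify that at each step at most $8$ colours are forbidden,'' and you yourself flag this as ``the main obstacle.'' That feasibility is precisely what is in doubt: a naive order or naive forbidden sets (forbidding $C_H(s)$, all colours of coloured neighbours, and all colours needed to preserve unique colours at already-coloured or outside vertices) can easily exceed $9$, which is why the paper's lists are tailored --- e.g.\ colouring $x$ avoids only $\vp(N_G(y)\setminus S)\cup\{\vp(y),\vp(x_1),\vp(u_1),\vp(z),\vp(v_1),\vp(u)\}$, exploiting that $x_1$ (when $x_1\neq z_1$) is a $3$-vertex of $H$ and hence automatically retains two unique colours.

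Likewise, your parenthetical that degenerate identifications ``either collapse to an already-excluded configuration or go through with a smaller $S$'' is not a proof and does not reflect how these cases must actually be treated: in the paper the identification $z_1\in\{x_1,v_1\}$ is not removable by shrinking $S$ but is absorbed into the colouring order. So while your reduction-and-greedy-extension strategy is the same one the paper uses, the proposal as written establishes the structural preliminaries only; the colouring arguments that prove the claim are missing, and until they are written out and counted, the claim is not proved. (Separately, if you do write them out, you will also need to dispose of your extra case $C=zywu$, which the paper's two displayed configurations do not literally cover.)
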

\begin{proof}
Let $F:xyzuv$ be a $5$-cycle with three $3$-vertices.
By Claim~\ref{clm:33344}, we may assume $x$, $z$, $v$ are $3$-vertices and $y$, $u$ are $4$-vertices.
Let $x_1$, $z_1$, and $v_1$ be the neighbor of $x$, $z$, and $v$, respectively, that is not on $F$.
By Claim~\ref{clm:34334-3}, $x_1$, $z_1$, and $v_1$ are $4$-vertices.

Suppose $z$ is a bad $3$-vertex. 
Without loss of generality,  assume  $uzz_1u_1$ is a $4$-cycle where $u_1$ is a neighbor of $u$ not on $F$. 
See Figure~\ref{fig:pentagon}(c).
For $S=\{x,y,z,u,v,z_1,u_1\}$, let $H$ be the $S$-reduced graph.
By the minimality of $G$, $H$ has a $2$-PCF $9$-coloring $\vp$.
Color $z_1$ with a color not in $C_H(z_1) \cup \vp(N_G(u_1) \setminus S)$, color $y$ with a color not in $C_H(y) \cup \{\vp(x_1),\vp(z_1)\}$, and color $u$ with a color not in $C_H(u) \cup \vp(N_G(u_1) \setminus S) \cup \{\vp(v_1), \vp(y),\vp(z_1)\}$ to guarantee two unique colors for each of $z$ and $u_1$.
Color $u_1$ with a color not in $C_H(u_1) \cup \{\vp(z_1), \vp(u)\}$, and color $z$ with a color not in $\vp(N_G(z_1)\setminus S) \cup \{\vp(z_1), \vp(u), \vp(y), \vp(u_1)\}$.
At this point,  two unique colors for $z_1$ are guaranteed if $z_1 \notin \{x_1,v_1\}$; if $z_1 \in \{x_1, v_1\}$, then two unique colors for $z_1$ will be guaranteed when coloring $x$ and $v$.
Color $x$ with a color not in $\vp(N_G(y) \setminus S) \cup \{\vp(y),\vp(x_1),\vp(u_1),\vp(z),\vp(v_1), \vp(u)\}$ to guarantee two unique colors for each of $y$ and $v$, and also $z_1$ if $x_1=z_1$.
Note that if $x_1\neq z_1$, then there are three unique colors for $x_1$, so regardless of the color assigned to $x$, at least two unique colors are guaranteed for $x_1$.
Finally, color $v$ with a color not in $\{\vp(x), \vp(x_1),\vp(y), \vp(u), \vp(u_1), \vp(z), \vp(v_1)\}$ to guarantee two unique colors for each of $x$ and $u$, and also $z_1$ if $v_1=z_1$.
Note that if $v_1\neq z_1$, then there are three unique colors for $v_1$, so regardless of the color assigned to $v$, at least two unique colors are guaranteed for $v_1$.
Now, $\vp$ is a $2$-PCF $9$-coloring of $G$, which is a contradiction.


Suppose $v$ or $x$ is a bad $3$-vertex.
Without loss of generality, assume  $uvv_1u_1$ is a $4$-cycle where $u_1$ is a neighbor of $u$ not on $F$. 
See Figure~\ref{fig:pentagon}(d).
For $S'=\{x,y,z,u,v,u_1,v_1\}$, let $H'$ be the $S'$-reduced graph.
By the minimality of $G$, $H'$ has a $2$-PCF $9$-coloring $\vp'$.
Color $y$ with a color not in $C_{H'}(y) \cup \{\vp'(z_1), \vp'(x_1)\}$, 
color $v_1$ with a color not in $C_{H'}(v_1) \cup \vp'(N_G(u_1) \setminus S')$,
and color $u$ with a color not in $C_{H'}(u) \cup \vp'(N_G(u_1) \setminus S') \cup \{\vp'(y), \vp'(z_1), \vp'(v_1)\}$ to guarantee two unique colors for each of $z$ and $u_1$.
Color $u_1$ with a color not in $C_{H'}(u_1) \cup \{\vp'(u), \vp'(v_1)\}$,
and color $v$ with a color not in $\vp'(N_G(v_1) \setminus S') \cup \{\vp'(v_1), \vp'(u_1), \vp'(u), \vp'(x_1), \vp'(y)\}$ to guarantee two unique colors for $x$.
At this point two unique colors for $v_1$ are guaranteed if $v_1 \neq z_1$; if $v_1 = z_1$, then two unique colors for $v_1$ will be guaranteed when coloring $z$.
Color $x$ with a color not in $C_{H'}(x) \cup \{\vp'(v), \vp'(v_1), \vp'(u), \vp'(y)\}$ to guarantee two (actually three) unique colors for $v$.
Color $z$ with a color not in $\vp'(N_G(y) \setminus S') \cup \{\vp'(y),\vp'(z_1), \vp'(u), \vp'(u_1), \vp'(v)\}$ to guarantee two unique colors for each of $u$ and $y$, and also $v_1$ if $z_1=v_1$.
Note that if $z_1\neq v_1$, then there are three unique colors for $z_1$, so regardless of the color assigned to $z$, at least two unique colors are guaranteed for $z_1$.
Now, $\vp'$ is a $2$-PCF $9$-coloring of $G$, which is a contradiction.
\end{proof}

Using the above claims, we now explicitly state and prove the essential reducible configurations. 

\begin{lemma}\label{lem:reducible}
In $G$,
\begin{itemize}
\item[(1)] every vertex has degree at least $3$,
\item[(2)] every cycle has length at least $4$,
\item[(3)] every $3$-vertex is incident with at most one $4$-face,
\item[(4)] if a $5$-face is incident with exactly three $3$-vertices, then they are all good $3$-vertices.
\item[(5)] every $5^+$-face $f$ is incident with at most $\left \lfloor \frac{3d(f)}{4} \right \rfloor$ $3$-vertices.
\end{itemize}
\end{lemma}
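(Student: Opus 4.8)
The plan is to read items (1)--(4) off the claims already established and to prove (5) by a short counting argument around a facial cycle. Item (1) is precisely Claim~\ref{clm:3vx}, item (2) is precisely Claim~\ref{clm:triangle}, and item (3) is precisely Claim~\ref{clm:4faces}; together, (1) and (2) say that $G$ has girth at least $4$. Before invoking the cycle-based claims for faces I would record the standard preliminary that a minimum counterexample may be assumed to be $2$-connected, so that every face of $G$ is bounded by a cycle and the claims apply without change to facial cycles. Item (4) then follows: a $5$-face is bounded by a $5$-cycle $C$, and if $C$ carried at least four $3$-vertices it would contain a subpath on four $3$-vertices, contradicting Claim~\ref{clm:3333}; hence $C$ carries at most three $3$-vertices, and when it carries exactly three Claim~\ref{clm:bad3vx} makes all three of them good.

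For (5), fix a face $f$ with $d(f)\ge 5$ and walk once around its boundary cycle. By Claim~\ref{clm:3333} this cycle has no path on four consecutive $3$-vertices, so, read cyclically, the $3$-vertices incident with $f$ split into maximal runs, each of length at most $3$, and between any two consecutive runs there lies at least one $4^+$-vertex. Moreover not every vertex of the cycle is a $3$-vertex: a cycle of length $d(f)\ge 5>3$ with all vertices of degree $3$ would itself contain a path on four $3$-vertices. Hence there is at least one run; say there are $r\ge 1$ of them. Then $f$ is incident with at most $3r$ vertices of degree $3$ and at least $r$ vertices of degree at least $4$, so writing $m$ for the number of $3$-vertices incident with $f$ we have $m\le 3r$ and $d(f)\ge m+r\ge m+\tfrac{m}{3}=\tfrac{4m}{3}$, i.e.\ $m\le\tfrac34 d(f)$. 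Since $m$ is an integer this gives $m\le\big\lfloor\tfrac34 d(f)\big\rfloor$, which is exactly (5).

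I expect the only delicate point to be the one folded into the first paragraph: to transport the cycle-based claims to faces one really does need every face of the minimum counterexample to be bounded by a cycle, so $2$-connectedness must be dealt with first -- either as a routine reduction or by a direct argument exploiting that a cut vertex has degree at most $4$ and hence separates into two parts, one of which is joined to it by at most two edges. Once that is in place, (1)--(4) are bookkeeping and (5) is the estimate above; in particular, no case analysis modulo $4$ is needed for the floor in (5), since it falls out of $m$ being an integer together with the single inequality $d(f)\ge\tfrac43 m$.
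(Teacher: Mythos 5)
Your proposal is correct and takes essentially the same route as the paper: items (1)--(4) are read off Claims~\ref{clm:3vx}, \ref{clm:triangle}, \ref{clm:4faces} and~\ref{clm:bad3vx}, and (5) is deduced from Claim~\ref{clm:3333} by exactly the run-counting you spell out (the paper asserts this step in one line). Your remark that face boundaries need to be cycles for the cycle-based claims to apply is a point the paper leaves implicit; the block/cut-vertex reduction you sketch does go through with $9$ colors (glue colorings of the two pieces after permuting colors on one side so the cut vertex's two partial neighborhoods use disjoint colors), so this is a harmless extra observation rather than a gap.
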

\begin{proof}

By Claim~\ref{clm:3vx}, every vertex has degree at least $3$ so (1) holds.
By Claim~\ref{clm:triangle}, every cycle has length at least $4$ so (2) holds.
By Claim~\ref{clm:4faces}, every $3$-vertex is incident with at most one $4$-face so  (3) holds.
By Claim~\ref{clm:bad3vx}, if a $5$-face is incident with exactly three $3$-vertices, then they are all good $3$-vertices, hence (4) holds.
By Claim~\ref{clm:3333}, every $5^+$-face $f$ does not have four  consecutive $3$-vertices, so $f$ is incident with at most $\left \lfloor \frac{3d(f)}{4} \right \rfloor$ $3$-vertices, hence (5) holds.
\end{proof}

We use the well-known discharging method to finish off the proof. 
See~\cite{2017CrWe} for a nice expository survey of the method. 
Let $F(G)$ denote the set of faces of $G$, and for a face $f$, let $d(f)$ denote the length of a boundary walk of $f$. 
For each $z\in V(G)\cup F(G)$, let the {\it initial charge} $\mu(z)$ of $z$ be $d(z)-4$.
By Euler's formula the sum of all initial charge is negative: $\sum_{v\in V(G)}(d(v)-4)+\sum_{f\in F(G)}(d(f)-4)=2|E(G)|-4|V(G)|+2|E(G)|-4|F(G)|=-8$.


Here are the discharging rules: 

\begin{enumerate}
    \item[\textbf{[R1]}] Every $5$-face sends charge $1/3$ to each incident good $3$-vertex.
    \item[\textbf{[R2]}] Every $5$-face sends charge $1/2$ to each incident bad $3$-vertex.
    \item[\textbf{[R3]}] Every $6^+$-face sends charge $1/2$ to each incident $3$-vertex.
\end{enumerate}

We recount the charge after applying the discharging rule.
We will obtain that the final charge is non-negative for each vertex and face, to conclude that the sum of the final charge is non-negative. 
This is a contradiction since the initial charge sum is negative and the discharging rule preserved the total charge sum.
We conclude that a counterexample could not have existed in the first place.

Only $3$-vertices have negative initial charge since $G$ has no $2^-$-vertices by Lemma~\ref{lem:reducible}~(1).
Note that $G$ has no $3$-faces by Lemma~\ref{lem:reducible}~(2).

Each good $3$-vertex $v$ is incident with three $5^+$-faces, each of which sends charge at least $\frac{1}{3}$ to $v$ by \textbf{[R1]} and \textbf{[R3]}, so the final charge of $v$ is at least $-1+\frac{1}{3}\cdot3 = 0$.
Each bad $3$-vertex $v$ is incident with at least two $5^+$-faces by Lemma~\ref{lem:reducible}~(3), so $v$ receives charge $\frac{1}{2}$ at least twice by \textbf{[R2]} and \textbf{[R3]}, so the final charge of $v$ is at least $-1+\frac{1}{2}\cdot2=0$. 
Each $4$-vertex and $4$-face is not involved in the discharging process, so the final charge is the initial charge, which is $0$.
If $f$ is a $5$-face incident with  a bad $3$-vertex, then $f$ is incident with at most one other  $3$-vertex by Lemma~\ref{lem:reducible}~(4) and (5), so  the final charge of $f$ is at least $1-\frac{1}{2}\cdot2 =0$ by \textbf{[R1]} and \textbf{[R2]}.
If $f$ is a $5$-face not incident with a bad $3$-vertex, then $f$ is incident with at most three good $3$-vertices by Lemma~\ref{lem:reducible}~(5), so  the final charge of $f$ is at least $1-\frac{1}{3}\cdot 3=0$ by \textbf{[R1]}.
Each $6^+$-face $f$ has at most  $\left\lfloor \frac{3d(f)}{4}\right\rfloor$ incident $3$-vertices by Lemma~\ref{lem:reducible}~(5).
Thus, the final charge of $f$ is at least $d(f)-4 - \left\lfloor \frac{3d(f)}{4}\right\rfloor \frac{1}{2}$ by \textbf{[R2]}, which is non-negative since $d(f) \ge 6$.

 \section{Further discussion}

As mentioned in the introduction, Wegner's Planar Graph Conjecture is true for planar graphs with maximum degree $3$.
Recall that for a graph $G$ (not necessarily planar) with maximum degree $3$, properly coloring $G^2$ is equivalent to a $2$-PCF coloring of $G$. 
One could also ask what the $1$-PCF chromatic number is for planar graphs with maximum degree $3$, yet this is already known to be at most $4$ by a result of Liu and Yu~\cite{2013LiYu}.
Their result actually applies to all graphs (not necessarily planar) of maximum degree $3$; see also the discussion in the last section of~\cite{arXiv_CaPeSk}.
Caro, Petru\v{s}evski, and \v{S}krekovski~\cite{arXiv_CaPeSk} conjectured that every graph $G$ that is not the $5$-cycle is 1-PCF $(\Delta(G)+1)$-colorable; this conjecture is known to be true for only $\Delta(G)\leq 3$. 

For planar graphs with maximum degree $4$, Wegner's Planar Graph Conjecture is unresolved, so we proved a result in the flavor of 2-PCF colorings. 
One could also ask what the maximum 1-PCF chromatic number is for a planar graph with maximum degree $4$.
By the conjecture mentioned in the previous paragraph, one guess is that the bound is at most $5$. 

We also remark that in~\cite{fabrici2022proper}, Fabrici et al. constructed  a planar graph that is not 1-PCF 5-colorable,  conjectured that all planar graphs are 1-PCF 6-colorable, and proved that all planar graphs are 1-PCF 8-colorable.






\section*{Acknowledgements}
Eun-Kyung Cho was supported by Basic Science Research Program through the National Research Foundation of Korea (NRF) funded by the Ministry of Education (NRF-2020R1I1A1A0105858711).
Ilkyoo Choi was supported by the Basic Science Research Program through the National Research Foundation of Korea (NRF) funded by the Ministry of Education (NRF-2018R1D1A1B07043049), and also by the Hankuk University of Foreign Studies Research Fund.
Bernard Lidick\'y was supported in part by NSF grants DMS-1855653 and DMS-2152490.

\bibliographystyle{abbrvurl}
\bibliography{refs}

\end{document}